\begin{document}


\definecolor{red}{rgb}{.0, 0., 0.}
\definecolor{blue}{rgb}{0.0, 0.0, .0}

\newcommand{\rb}[1]{\raisebox{1.5ex}[0pt]{#1}}
\newcommand{\be}{\begin{eqnarray}}
\newcommand{\ee}{\end{eqnarray}}
\newcommand{\bal}{\begin{aligned}}
\newcommand{\eal}{\end{aligned}}
\newcommand{\bes}{\begin{eqnarray*}}
\newcommand{\ees}{\end{eqnarray*}}
\newcommand{\bs}{\begin{subeqnarray}}
\newcommand{\es}{\end{subeqnarray}}
\newcommand{\bss}{\begin{subeqnarray*}}
\newcommand{\ess}{\end{subeqnarray*}}
\newcommand{\tb}[1]{|\hskip -.08em|\hskip-.08em|#1|\hskip-.08em|\hskip-.08em|}

\newtheorem{theorem}{Theorem}[section]
\newtheorem{lemma}[theorem]{Lemma}
\newtheorem{proposition}[theorem]{Proposition}
\newtheorem{remark}[theorem]{Remark}
\newtheorem{corollary}[theorem]{Corollary}
\newtheorem{definition}[theorem]{Definition}

\newcommand{\steref}[1]{Step~\ref{#1}}
\newcommand{\defref}[1]{Definition~\ref{#1}}
\newcommand{\thmref}[1]{Theorem~\ref{#1}}
\newcommand{\secref}[1]{Section~\ref{#1}}
\newcommand{\subsecref}[1]{\S~\ref{#1}}
\newcommand{\lemref}[1]{Lemma~\ref{#1}}
\newcommand{\propref}[1]{Proposition~\ref{#1}}
\newcommand{\rmkref}[1]{Remark~\ref{#1}}
\newcommand{\figref}[1]{Figure~\ref{#1}}
\newcommand{\figrefs}[2]{Figure2~\ref{#1} and ~\ref{#2}}
\newcommand{\tabref}[1]{Table~\ref{#1}}
\newcommand{\Fig}[1]{Figure~\ref{#1}}
\numberwithin{equation}{section}




\def\for{\quad{for}\ }
\def\with{\quad{with}\ }
\def\inn{\quad{in}\ }
\def\nm#1#2{\|#1\|_{#2}}

\def\on{\quad{on}\ }
\def\orr{\quad{or}\ }
\def\andy{\quad{and}\ }
\def\where{\quad{where}\ }
\def\when{\quad{when}\ }
\def\ifff{\quad{if}\ }

\def\hf{\widehat f}
\def\hu{\widehat u}
\def\tw{\widetilde w}
\def\z{\zeta}

\def\dt{\bar\partial}
\def\dsum{\displaystyle\sum}
\def\la{\lambda}
\def\ka{\kappa}
\def\Si{\Sigma}
\def\Om{\Omega}
\def\O{\Omega}
\def\om{\omega}
\def\ep{\epsilon}
\def\vep{\varepsilon}
\def\fy{\varphi}
\def\al{\alpha}

\def\de{\delta}
\def\De{\Delta}
\def\th{\theta}
\def\gk{{\Gamma_{\al,k}}}
\def\ga{\gamma}
\def\Ga{\Gamma}
\def\G{\Gamma}
\def\as{\quad\text{as}\ }
\def\Re{\operatorname{Re}}
\def\Im{\operatorname{Im}}
\def\span{\operatorname{span}}
\def\max{\operatorname{max}}
\def\trace{\operatorname{trace}}
\def\det{\operatorname{det}}
\def\min{\operatorname{min}}
\def\and{\quad\text{and}\quad}
\def\i{\text{\rm i}}
\def\({\left(}
\def\){\right)}
\def\bF{\mathbf F}
\def\bG{\mathbf G}
\def\bV{\mathbf V}
\def\bu{\mathbf u}
\def\bv{\mathbf v}
\def\bw{\mathbf w}
\def\by{\mathbf y}
\def\b0{\mathbf 0}

\renewcommand{\hat}{\widehat}

\begin{frontmatter}

\title{Turing Instability for a Ratio-Dependent Predator-Prey Model with
Diffusion}
\author[local,perm]{Shaban Aly\fnref{fn1}}
\ead{shhaly70@yahoo.com}

\author[local]{Imbunm Kim\fnref{fn2}}
\ead{ikim@snu.ac.kr}

\author[local,joint]{Dongwoo Sheen\corref{cor1}\fnref{fn3}}
\ead{sheen@snu.ac.kr}
\ead{Tel:08228806543/Fax:08228874694}

\address[local]{Department of Mathematics, Seoul National University, Seoul
 151-747,Korea.}
\address[perm]{ permanent address:Department of Mathematics, Faculty of Science, Al-Azhar University, Assiut 71511, Egypt.}
\address[joint]{Interdisciplinary Program in Computational Science \& Technology,\\
  Seoul National University, Seoul 151-747,Korea}


\begin{abstract}
Ratio-dependent predator-prey models have been increasingly favored by field
ecologists where predator-prey interactions have to be taken into account
the process of predation search.
In this paper we study the conditions of the existence and stability properties of the
equilibrium solutions in a reaction-diffusion model in which predator mortality
is neither a constant  nor an unbounded function,
but it is increasing with the predator abundance. We show that analytically at a
certain critical value a diffusion driven (Turing type) instability occurs,
{\it i.e.} the stationary solution stays stable with respect to the kinetic system
(the system without diffusion). We also show that the stationary solution
becomes unstable with respect to the  system with diffusion and that
Turing bifurcation takes place: a spatially
non-homogenous (non-constant) solution (structure or pattern) arises.
A numerical scheme that preserve the positivity of the numerical solutions
and the boundedness of prey solution will be presented.
Numerical examples are also included.

\end{abstract}

\begin{keyword}
reaction-diffusion system \sep population dynamics \sep bifurcation \sep pattern
formation.
\PACS 35K57 \sep 92B25 \sep 93D20.


\end{keyword}

\end{frontmatter}

\section{Introduction}
Since it is rare to find a pair of biological species in nature which meet precise prey-dependence or ratio-dependence
functional responses in predator-prey models,
especially when predators have to search for food (and therefore,
have to share or compete for food), a more suitable general predator-prey
theory should be based on the so-called ratio-dependent theory (see
\cite{Berezovskaya, Hsu, Jost, Kuang}). The theory may be stated as follows:
the per capita predator growth rate should be a function of
the ratio of prey to predator abundance, and so should be the so-called
predator functional response. Such cases are strongly supported by numerous field
and laboratory experiments and observations (see, for instance,
\cite{Arditi1, Arditi2, Arditi3, Arditi4}).

Denote by $N(t)$ and $P(t)$ the population densities of prey and predator at time $t,$ respectively.
Then the ratio-dependent type predator-prey model with Michaelis-Menten type
functional response is given as follows:
\begin{subeqnarray} \label{1.1}
\frac{dN}{dt} &=&rN\left(1-\frac{N}{K}\right)-\frac{aNP}{mP+N}, \\
\frac{dP}{dt} &=&P\left[ -Q(P)+\frac{bN}{mP+N}\right],
\end{subeqnarray}
where $a, b, m, K,$ and $r$ are positive constants.
In \eqref{1.1}, $Q(P)$ denotes a mortality function of predator, and
$r$ and $K$ the prey growth rate with intrinsic growth rate and carrying
capacity in the absence of predation, respectively, while $a, b,$ and $m$ are
model-dependent constants.

From a formal point of view, this model looks very similar to the well-known
Michaelis-Menten-Holling predator-prey model:
\begin{subeqnarray}\label{1.2}
\frac{dN}{dt} &=&rN\left(1-\frac{N}{K}\right)-\frac{aNP}{c+N},  \\
\frac{dP}{dt} &=&P\left[-Q(P)+\frac{bN}{c+N}\right].
\end{subeqnarray}
Indeed, the only difference between Models (\ref{1.1}) and (\ref{1.2}) is
that the parameter $c$ in (\ref{1.2}) is replaced by $mP$ in (\ref{1.1}).
Both terms $mP$ and $c$ are proportional to the so-called searching time of
the predator, namely, the time spent by each predator to find one prey.
Thus, in the Michaelis-Menten-Holling model \eqref{1.2} the searching time is assumed to
be independent of predator density, while in the ratio-dependent
Michaelis-Menten type model \eqref{1.1} it is proportional to predator
density ({\it i.e.}, other predators strongly interfere).

Predators and preys are usually abundant in space with different densities
at difference positions and they are diffusive.
Several papers have focused on the effect of diffusion which plays a crucial
role in permanence and stability of population (see \cite{baurmann, Casten,
lizana08, Grindrod, mukho,  Okubo, Pang}, and the references therein). Especially in \cite{mukho} the effect of variable
dispersion rates on Turing instability was extensively studied, and in
\cite{lizana08}
the dynamics of ratio-dependent system has been analyzed in details  with diffusion and delay terms included.
Cavani and Farkas (see \cite{Cavani}) have considered a modification of 
\eqref{1.2} when a diffusion was introduced, yielding:
\begin{subeqnarray}\label{1.3}
\frac{\partial N}{\partial t} &=&rN\left(1-\frac{N}{K}\right)-\frac{aNP}{c+N}+D_{1}%
\frac{\partial^{2}N}{\partial x^{2}},\quad x\in (0,l),t>0,
\\
\frac{\partial P}{\partial t} &=&P\left[
-Q(P)+\frac{bN}{c+N}\right]+D_{2}\frac{\partial
^{2}P}{\partial x^{2}},\quad x\in (0,l),t>0,
\end{subeqnarray}
where the specific mortality of the predator is given by
\begin{equation}
Q(P)=\frac{\gamma +\delta P}{1+P},  \label{1.4}
\end{equation}
which depends on the quantity of predator. Here, the positive constants $\gamma$ and $\delta$
denote the minimal mortality and the limiting mortality of the predator,
respectively. Throughout the paper, the following natural condition
\begin{equation}\label{assume-gamma-delta}
0<\gamma \leq \delta
\end{equation}
will be assumed, and we will consider the case of
the constant diffusivity, $D_{i}>0$, $i=1,2$. The advantage of this model
is that the predator mortality is neither a constant nor an
unbounded function, but still it is increasing with the predator abundance.
On the other hand,  combining \eqref{1.1} and \eqref{1.3}, 
many authors (see \cite{Bartumeus, Pang, Wanga}, 
for instance) have studied a more general model as follows:
\begin{subeqnarray*} 
\frac{\partial N}{\partial t} &=&rN\(1-\frac{N}{K}\)-\frac{aNP}{mP+N}+D_{1}%
\frac{\partial^{2}N}{\partial x^{2}},\quad x\in (0,l) ,t>0, 
\\
\frac{\partial P}{\partial t} &=&P\left[-Q(P)+\frac{bN}{mP+N}\right]+D_{2}\frac{%
\partial^{2}P}{\partial x^{2}},\quad x\in (0,l) ,t>0,
\end{subeqnarray*}%
with the specific mortality of the predator somewhat restricted in the form
\begin{equation}
Q(P)=d.  \label{1.6}
\end{equation}
In this paper we consider a ratio-dependent reaction-diffusion predator-prey
model with
Michaelis-Menten type functional response and
the specific mortality of the predator given by (\ref{1.4}) instead of \eqref{1.6}.
We study the effect of the diffusion on the stability of the stationary
solutions.
Also we explore under which parameter values Turing instability can occur
giving rise to non-uniform stationary solutions satisfying
the following equations:
\begin{subeqnarray}\label{full1}
\frac{\partial N}{\partial t} &=&rN\left(1-\frac{N}{K}\right)-\frac{aNP}{mP+N}+D_{1}%
\frac{\partial^{2}N}{\partial x^{2}},\quad x\in (0,l),t>0,
\\
\frac{\partial P}{\partial t} &=&P\left[ -\frac{\gamma +\delta P}{1+P}+\frac{%
bN}{mP+N}\right] +D_{2}\frac{\partial^{2}P}{\partial x^{2}},\quad x\in
(0,l),t>0,
\end{subeqnarray}%
assuming that prey and predator are diffusing according to Fick's
law in the interval $x\in \lbrack 0,l].$ We are interested in the
solutions $N, P:(l,0)\times \mathbb R^{+} \rightarrow \mathbb R^{+}$ fulfilling the Neumann boundary conditions
\begin{equation}
N_{x}(0,t)=N_{x}(l,t)=P_{x}(0,t)=P_{x}(l,t)=0,  \label{Cond-full1}
\end{equation}
and initial conditions
\begin{equation*}
N(x,0)\geq 0,\quad P(x,0)\geq 0,\quad x\in (0,l).
\end{equation*}%
For simplicity, we nondimensionalize the system (\ref{full1}) with
the following scaling
\begin{equation*}
\widetilde{t}=rt,\quad \widetilde{N}=\frac{N}{K},\quad \widetilde{P}=%
\frac{mP}{K},
\end{equation*}
and letting
\begin{equation*}
\alpha =\dfrac{a}{mr},\;\widetilde{\gamma }=\dfrac{\gamma }{b},\;\widetilde{%
\delta }=\dfrac{\delta }{b},\;\epsilon =\dfrac{b}{r},\;\beta =\frac{K}{m}%
,\;d_{1}=\dfrac{D_{1}}{r},\;d_{2}=\dfrac{D_{2}}{r}.
\end{equation*}%
For the sake of simplification of notations, dropping tildes,
the system \eqref{full1} takes the form
\begin{subeqnarray} \label{2.1}
\frac{\partial N}{\partial t} &=&N(1-N)-\frac{\alpha NP}{P+N}+d_{1}\frac{%
\partial^{2}N}{\partial x^{2}},\quad x\in (0,l),t>0, \\
\frac{\partial P}{\partial t} &=&\epsilon P\left[-\frac{\gamma +\delta \beta P}{%
1+\beta P}+\frac{N}{P+N}\right]+d_{2}\frac{\partial^{2}P}{\partial x^{2}},\quad
x\in (0,l),t>0.
\end{subeqnarray}%
Set
\[
\bF=\left(\begin{array}{c}F_1 \\F_2 \end{array} \right),
\bu:=\left(\begin{array}{c}N \\P \end{array} \right)
D:=\left(
\begin{array}{cc}
d_{1} & 0 \\
0 & d_{2}
\end{array}
\right),
\]
where
\begin{equation*}
F_{1}(N,P)=N(1-N)-\frac{\alpha NP}{P+N},\quad F_{2}(N,P)=\epsilon P\left[-\frac{%
\gamma +\delta \beta P}{1+\beta P}+\frac{N}{P+N}\right].
\end{equation*}%
Then the system \eqref{2.1} with the boundary conditions \eqref{Cond-full1}
takes the form
\begin{equation}
\bu_{t}=\bF(\bu)+D\frac{\partial^{2}\bu}{\partial x^{2}};\quad
\bu_{x}(0,t)=\bu_{x}(l,t)=\b0. \label{2.3} 
\end{equation}
Clearly, in case the predator and prey are spatially homogeneous,
the spatially constant solution $\bu(t)=(N(t),P(t))^T$
of (\ref{2.3}),
fulfilling the boundary conditions obviously,
satisfies the kinetic system
\begin{equation}
\bu_{t}=\bF(\bu).  \label{2.5}
\end{equation}

\section{The model without diffusion}
In this section we will study the system (\ref{2.1}) without diffusion,
{\it i.e.},
\begin{subeqnarray} \label{kinetic}
\frac{dN}{dt} &=&N(1-N)-\frac{\alpha NP}{P+N}, \\
\frac{dP}{dt} &=&\epsilon P\left[-\frac{\gamma +\delta \beta P}{1+\beta P}+\frac{%
N}{P+N}\right].
\end{subeqnarray}%
In particular, we will focus on the existence of equilibria
and their local stability. This information will be crucial in the next
section where we study the effect of the diffusion parameters on the
stability of the steady states.

The equilibria of the system (\ref{kinetic}) are given by the solution of
the following equations
\begin{equation*}
N(1-N)-\frac{\alpha NP}{P+N}=0,\quad \epsilon P\left(-\frac{\gamma +\delta \beta P%
}{1+\beta P}+\frac{N}{P+N}\right)=0.
\end{equation*}%
The system has at least one equilibrium with positive values. This is
the point of intersection of the prey null-cline
\begin{equation*}
P=H_{1}(N)=\frac{(1-N)N}{\alpha -(1-N)}
\end{equation*}%
and the predator null-cline
\begin{equation*}
P=H_{2}(N)=\frac
{\gamma-\beta(1-\delta )N+
2\sqrt{\left\{\gamma-\beta (1-\delta )N\right\}^{2}
-4\beta\delta(1-\gamma)N}}{2\beta\delta}.
\end{equation*}%
Thus, denoting the coordinates of a positive equilibrium by $(\overline{N},%
\overline{P})$, these coordinates satisfy $\overline{P}=H_{1}(\overline{N}%
)=H_{2}(\overline{N}).$

The Jacobian matrix of the system (\ref{kinetic}) linearized at $(\overline{N%
},\overline{P})$ is
\begin{equation}
A=\left(
\begin{array}{lr}
\Theta_{1} & -\Theta_{2} \\
\Theta_{3} & -\Theta_{4}%
\end{array}
\right),  \label{2.6}
\end{equation}
where
\begin{equation*}
\trace A=\Theta_{1}-\Theta_{4},\det A=\Theta_{2}\Theta_{3}-\Theta
_{1}\Theta_{4}
\end{equation*}
and
\begin{subeqnarray*}
\Theta_{1} &=&-\overline{N}+\frac{\alpha \overline{N}\,\overline{P}}{(%
\overline{P}+\overline{N})^{2}},
\quad \Theta_{2}=\frac{\alpha \overline{N}^{2}}{(\overline{P}+\overline{N})^{2}},
\\
\Theta_{3}&=&\frac{\epsilon
  \overline{P}^{2}}{(\overline{P}+\overline{N})^{2}},
\quad
\Theta_{4} =\frac{\epsilon \beta \overline{P}(\delta -\gamma )}{(1+\beta
\overline{P})^{2}}+\frac{\epsilon \overline{N}\,\overline{P}}{(%
\overline{P}+\overline{N})^{2}}.
\end{subeqnarray*}
The characteristic equation is given by
\begin{equation*}
\lambda^{2}-\(\trace A\)\lambda +\det A=0.
\end{equation*}
Recall that $(\overline{N},\overline{P})$ is
locally asymptotically stable if $\Re\lambda <0$, which is equivalent to have
$\trace A<0$ and $\det A>0$. For this, we will assume that
\begin{eqnarray}\label{assume-theta}
\Theta_{1}<\Theta_{4},\quad \Theta_{2}\Theta_{3} > \Theta_{1}\Theta_{4}.
\end{eqnarray}
\begin{remark}
Due to \eqref{assume-gamma-delta}, we see that $\Theta_4 > 0$. If $\Theta_1
\le 0$, then the two conditions in \eqref{assume-theta} hold.
\end{remark}

\section{The model with diffusion}
In this section we will investigate in
Turing instability and bifurcation for our model problem. We will also study
pattern formation of the predator-prey solutions.
\subsection{Local existence of solutions}
Before studying the stability of equilibrium solutions,
we will discuss about the local existence and uniqueness of solution
for a given ratio-dependent reaction-diffusion predator-prey model.
Applying the criteria for the local existence of solution (see
\cite{Morgan2, Morgan}) to the nonlinear parabolic systems (\ref{2.3}),
we see that there exists a unique local solution of the given system.

Let $\Omega$ be a bounded region in $\mathbb R^n, n\ge 2,$ with smooth boundary
$\partial \Omega$ and $\nu$ denotes the unit outward normal to $\O$.
Then Morgan considered in reference (\cite{Morgan2})
essentially of the form
\begin{subeqnarray}\label{slparabolic}
\bu_t(x,t)&=&D\Delta \bu(x,t) + f(\bu(x,t)), \quad x\in \Omega, t>0,\\
\frac{\partial \bu(x,t)}{\partial \nu}&=&\b0, \quad x\in \partial\Omega, t>0,\\
\bu(x,0) &=& \bu_0(x),\quad x\in \Omega,
\end{subeqnarray}
where $\bu: \Omega\times (0,\infty)\rightarrow \mathbb R^m,$
$f: \mathbb R^m \rightarrow \mathbb R^m$ is a locally Lipschitz continuous function,
$D$ is an $m\times m$ diagonal matrix with diagonal entries $d_j > 0$,
and $\bu_0:\Omega \rightarrow \mathbb R^m$ is bounded and
  measurable.
Then the following theorem holds \cite{Morgan2}:
\begin{theorem}\label{thm1} 
Under the assumptions on \eqref{slparabolic} stated above,
there exists $T_{\max} > 0$ and $M=(M_j)\in C([0,T_{\max}),\mathbb R^m)$ such that \\
{\rm (i)} (\ref{slparabolic}) has a unique classical solution
  $\bu$ on $\overline{\Omega}\times [0,T_{\max})$ which cannot be continued
  to $[0, T)$ for any $T>T_{\max}$,
 and \\
{\rm (ii)} $|\bu_j(\cdot,t)|_{\infty, \Omega}\leq M_j(t)$  for all $1\leq j\leq m, 0\leq t < T_{\max}$. \\
Moreover, if $T_{\max} < \infty$, then $|\bu_j(\cdot,t)|_{\infty, \Omega} \rightarrow \infty$ as $t \rightarrow T_{\max^{-}}$ for some $1\leq j \leq m$.
\end{theorem}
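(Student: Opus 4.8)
The plan is to recast \eqref{slparabolic} as an abstract semilinear evolution equation and to solve it by the classical analytic-semigroup/fixed-point method. First I would take the Banach space $X=C(\overline{\Om})^m$ (equivalently $L^\infty(\Om)^m$) and let $D\De$ denote the realization of the diagonal diffusion operator under the homogeneous Neumann condition $\partial\bu/\partial\nu=\b0$. Since each $d_j>0$ and $\partial\Om$ is smooth, $D\De$ generates a strongly continuous analytic semigroup $\{e^{tD\De}\}_{t\ge 0}$ on $X$ which, by the parabolic maximum principle, is a contraction in the sup-norm (componentwise, hence for the max-over-components norm). The system then reads $\bu_t=D\De\bu+f(\bu)$, and a mild solution is a fixed point of the map
\begin{equation*}
(\mathcal{T}\bu)(t)=e^{tD\De}\bu_0+\int_0^t e^{(t-s)D\De}f(\bu(s))\,ds.
\end{equation*}

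For local existence and uniqueness I would fix $R>0$ and work on the complete metric space $Z_\tau=\{\bu\in C([0,\tau];X):\ \sup_{t\le\tau}\|\bu(t)-\bu_0\|\le R\}$. Because $f$ is locally Lipschitz, it has a Lipschitz constant $L_R$ on the closed ball of radius $\|\bu_0\|+R$; combining this with $\|e^{tD\De}\|\le 1$ gives the contraction estimate $\|\mathcal{T}\bu-\mathcal{T}\bv\|_{C([0,\tau];X)}\le\tau L_R\|\bu-\bv\|_{C([0,\tau];X)}$, while a similar bound together with the strong continuity of the semigroup shows $\mathcal{T}$ maps $Z_\tau$ into itself once $\tau$ is small. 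The Banach fixed-point theorem then yields a unique mild solution on $[0,\tau]$. Parabolic smoothing upgrades it: since $s\mapsto f(\bu(s))$ is continuous, $\bu$ gains Hölder regularity in space and time, and Schauder estimates show $\bu$ is a genuine classical solution on $\overline{\Om}\times(0,\tau]$ with $\bu\in C(\overline{\Om}\times[0,\tau])$.

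Next I would pass to a maximal interval by a standard continuation argument. Let $T_{\max}$ be the supremum of all $\tau>0$ for which such a classical solution exists; it is positive by the previous step, and uniqueness lets the solutions on overlapping intervals be glued into a single $\bu$ on $\overline{\Om}\times[0,T_{\max})$ that cannot be extended past $T_{\max}$, giving (i). The function $M_j(t):=|\bu_j(\cdot,t)|_{\infty,\Om}$ is continuous on $[0,T_{\max})$ because $\bu\in C([0,T_{\max});X)$, which furnishes (ii) (in fact with equality). For the blow-up alternative I argue by contradiction: if $T_{\max}<\infty$ yet $\sup_{0\le t<T_{\max}}|\bu_j(\cdot,t)|_\infty<\infty$ for every $j$, then $\bu(t)$ stays in a fixed bounded subset of $X$, on which $f$ has a uniform Lipschitz constant. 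Hence the local step can be run from any $t_0<T_{\max}$ with a step length $\tau$ depending only on that uniform bound, not on $t_0$; taking $t_0$ within $\tau$ of $T_{\max}$ extends $\bu$ beyond $T_{\max}$, contradicting maximality. Therefore some $|\bu_j(\cdot,t)|_\infty\to\infty$ as $t\to T_{\max}^-$.

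The main obstacle, and the part most sensitive to the precise choice of function spaces, is the regularity bootstrap from the mild solution to a genuine classical solution, together with the uniform-in-$t_0$ time-step estimate that drives the continuation; the bare contraction argument for local existence is otherwise routine. This is precisely the content of Morgan's framework, to which we defer for the remaining technical details \cite{Morgan2}.
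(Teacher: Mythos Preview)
The paper does not prove this theorem at all; it is stated as a quotation from Morgan \cite{Morgan2} and no argument is given. Your proposal supplies the standard analytic-semigroup/contraction-mapping proof that underlies such local existence and blow-up results, and the outline is correct.

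One technical caveat worth flagging: the hypotheses in the paper allow $\bu_0$ to be merely bounded and measurable, not continuous. Your choice $X=C(\overline{\Om})^m$ therefore does not directly accommodate the initial data, and the alternative $L^\infty(\Om)^m$ you mention is not truly equivalent here because the heat semigroup fails to be strongly continuous on $L^\infty$. The usual fix is to exploit the instantaneous smoothing of the semigroup (so that $e^{tD\De}\bu_0\in C(\overline{\Om})^m$ for $t>0$) and run the contraction argument on $(0,\tau]$ with appropriate weighted norms near $t=0$, or to work in an $L^p$ framework first and then bootstrap. You implicitly acknowledge this when you call the regularity step ``the part most sensitive to the precise choice of function spaces'' and defer to \cite{Morgan2}; that is exactly where the subtlety lies, and deferring is consistent with what the paper itself does.
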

Defining $F_{1}(0,0) = 0 $ and $F_{2}(0,0)=0$ in
our model \eqref{2.3}, \thmref{thm1} implies local existence and uniqueness.
More precisely,
there exists $T_{\max} > 0$ and $N_M$ and $P_M\in C([0,T_{\max}))$ such that \\
{\rm (i)} (\ref{slparabolic}) has a unique classical solution
  $\bu=(N,P)^T$ on $[0,l]\times [0,T_{\max})$ which cannot be continued
  to $[0, T)$ for any $T>T_{\max}$,
 and \\
{\rm (ii)} $|N(\cdot,t)|_{\infty, (0,l)}\leq N_M(t)$ and
$|P(\cdot,t)|_{\infty, (0,l)}\leq P_M(t)$ for
$0\leq t < T_{\max}$. \\
Moreover, if $T_{\max} < \infty$, then either
$|N(\cdot,t)|_{\infty, (0,l)} \rightarrow \infty$ or
$|P(\cdot,t)|_{\infty, (0,l)} \rightarrow \infty$ as $t \rightarrow T_{\max^{-}}.$
\subsection{Turing instability}
\begin{definition}{ We say that the equilibrium $(\overline{N},\overline{P})$ is
Turing unstable if it is an asymptotically stable equilibrium of the
kinetic
system (\ref{kinetic}) but is unstable with respect to solutions of (\ref%
{2.1}) (see \cite{Okubo}).}
\end{definition}
An equilibrium is Turing unstable means that there are solutions of
(\ref{2.3}) that have initial values $\bu(x,0)$
arbitrarily closed to $\overline{\bu}$ (in the supremum norm) but
do not tend to $\overline{\bu}$ as $t$ tends to $\infty$.

We linearize system (\ref{2.1}) at the point $(\overline{N},\overline{P})$:
setting $\bv=(v_{1},v_{2})^T=(N-\overline{N},P-\overline{P})^T,$
the linearized system assumes the form
\begin{equation}
\bv_{t}=A\bv+D\frac{\partial^{2}\bv}{\partial x^{2}},  \label{2.7}
\end{equation}
while the boundary conditions remain unchanged:
\begin{equation}
\bv_{x}(0,t)=\bv_{x}(l,t)=\b0.  \label{2.8}
\end{equation}
The linear boundary value problem \eqref{2.7}-\eqref{2.8}
can be solved in several ways. In particular, the Fourier's method of
separation of variables
assumes that solutions can be represented in the form $\bv(x,t)=\psi(x)\by(t),$
with $\by:[0,\infty )\rightarrow \mathbb R^{2},$ $\psi
:[0,l]\rightarrow \mathbb R.$ Then
\begin{equation}
\frac{d\by}{dt}=(A- \zeta D)\by,  \label{2.9}
\end{equation}
and
\begin{equation}
-\psi_{xx}=\zeta \psi, \quad \psi_x
(0)=\psi_x(l)=0.  \label{2.10}
\end{equation}
The eigenvalues of the boundary value problem (\ref{2.10}) are
\begin{equation}
\zeta_{j}=\left(\frac{j\pi}{l}\right)^{2},\quad j=0,1,2,\cdots  \label{2.11}
\end{equation}
with corresponding eigenfunctions
\begin{equation}
\psi_{j}(x)=\cos \frac{j\pi x}{l}.  \label{2.12}
\end{equation}
Clearly, $0=\zeta_0<\zeta_{1}<\zeta_{2}<\cdots$. These eigenvalues
are to be substituted into (\ref{2.9}).
Denoting by $\by_{1j}$ and $\by_{2j}$ the two linearly independent solutions of (\ref{2.12})
associated with $\zeta =\zeta_{j}$, the
solution of the boundary value problem (\ref{2.7})-(\ref{2.8}) is obtained
in the form
\begin{equation}
\bv(x,t)=\sum_{j=1}^{\infty}(a_{1j}\by_{1j}(t)+a_{2j}\by_{2j}(t))\cos
\frac{j\pi x}{l}  \label{2-13}
\end{equation}
where $a_{ij}, i=1,2, j=0,1,2,\cdots,$ is to be determined according to the
initial condition $\bv(x,0).$
For instance, if $\by_{1j}(0)=(1,0)^T,\by_{2j}(0)=(0,1)^T$ for $j=0,1,2,\cdots$,
\begin{equation*}
\left[\begin{array}{c}
a_{10} \\ a_{20}
\end{array}
\right] =\frac{1}{l}\int\limits_{0}^{l}\bv(x,0)dx,
\end{equation*}
\begin{equation*}
\left[ \begin{array}{c}
a_{1k} \\ a_{2k}
\end{array}
\right] =\frac{2}{l}\int\limits_{0}^{l}\bv(x,0)\cos \frac{k\pi x}{l}\,dx\qquad,k=0,1,2,\cdots.
\end{equation*}
Set
\begin{equation}
B(\zeta )=A-\zeta D,\quad B_{j}=B(\zeta_{j})=A-\zeta_{j}D.
\label{2.14}
\end{equation}
According to Casten and Holland \cite{Casten}, if
both eigenvalues
of $B_{j}$ have negative real parts  for all $j$,
then the equilibrium
$(\overline{N},\overline{P})$ of (\ref{2.3}) is
asymptotically stable; if at least one
eigenvalue of a matrix $B_{j}$ has positive real part, then
$(\overline{N}, \overline{P})$ is unstable.
Recalling (\ref{2.6}), the trace and determinant are given by
\begin{subeqnarray}\label{2.17}
\trace B_{j}&=&\Theta_{1}-\Theta_{4}-\zeta_{j}(d_{1}+d_{2}),
\slabel{2.17a}\\
\det B_{j}&=&\Theta_{2}\Theta_{3}-\Theta_{1}\Theta_{4}
+\zeta_{j}\left\{d_{1}\Theta_{4}-d_{2}\Theta_{1}\right\}
+\zeta_{j}^2d_{1}d_2.
\slabel{2.17b}
\end{subeqnarray}
Notice that \eqref{assume-theta} implies that $\trace{B_j} < 0.$ Therefore
the eigenvalues of $B_j$ have negative real parts if $\det B_j>0$ which
is guaranteed in case
\begin{equation*}
d_{1}\Theta_{4}>d_{2} \Theta_{1},\quad
(d_{1}\Theta_{4}-d_{2} \Theta_{1})^2-4d_{1}d_{2}
(\Theta_{2}\Theta_{3}-\Theta_{1}\Theta_{4})<0.
\end{equation*}
Notice that  $\det B_j<0$ for all sufficiently large $j$ if
$d_1\Theta_4-d_2\Theta_1 < 0,$ since the eigenvalues $\zeta_j$ is monotonic
increasing with its limit $\infty.$
Therefore, one has the following theorem:
\begin{theorem}\label{thm2} Assume that \eqref{assume-gamma-delta} and
\eqref{assume-theta}. Then the equilibrium point
$(\overline{N},\overline{P})$ of \eqref{2.3} is
asymptotically stable if
\begin{equation}
d_{1}\Theta_{4}>d_{2} \Theta_{1},\quad
(d_{1}\Theta_{4}-d_{2} \Theta_{1})^2-4d_{1}d_{2}
(\Theta_{2}\Theta_{3}-\Theta_{1}\Theta_{4})<0;
\end{equation}
while it is Turing unstable if
\begin{equation}
d_1\Theta_4-d_2\Theta_1 < 0 \text{ and }
(d_{1}\Theta_{4}-d_{2} \Theta_{1})^2-4d_{1}d_{2}
(\Theta_{2}\Theta_{3}-\Theta_{1}\Theta_{4})>0,
\label{2.20}
\end{equation}
or if there exist a positive integer $k$ such that
\begin{eqnarray}\label{2.20b}
\det B_{k}&=&\Theta_{2}\Theta_{3}-\Theta_{1}\Theta_{4}
+\zeta_{k}\left\{d_{1}\Theta_{4}-d_{2}\Theta_{1}\right\}
+\zeta_{k}^2d_{1}d_2 < 0.
\end{eqnarray}
\end{theorem}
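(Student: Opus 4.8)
The plan is to reduce everything to the sign of $\det B_j$ as $j$ ranges over the nonnegative integers, using the Casten--Holland criterion quoted just before \eqref{2.17} in \cite{Casten}. The key preliminary observation, already recorded in the text, is that \eqref{assume-theta} forces $\trace B_j=\Theta_1-\Theta_4-\zeta_j(d_1+d_2)<0$ for every $j$, since $\Theta_1<\Theta_4$, $\zeta_j\ge 0$ and $d_1,d_2>0$. For a real $2\times2$ matrix with negative trace, both eigenvalues have negative real part exactly when the determinant is positive, and one eigenvalue is positive exactly when the determinant is negative. Thus I would restate the whole theorem as a statement about the single scalar quantity $\det B_j$: the equilibrium is asymptotically stable iff $\det B_j>0$ for all $j\ge 0$, and it is unstable as soon as $\det B_k<0$ for some $k\ge 1$ (note that $\det B_0=\det A=\Theta_2\Theta_3-\Theta_1\Theta_4>0$ by \eqref{assume-theta}, so the offending index must satisfy $k\ge 1$).

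Next I would regard the right-hand side of the determinant formula in \eqref{2.17} as the value $g(\zeta_j)$ of the upward parabola $g(\zeta)=d_1 d_2\,\zeta^2+(d_1\Theta_4-d_2\Theta_1)\zeta+(\Theta_2\Theta_3-\Theta_1\Theta_4)$, whose leading coefficient $d_1 d_2$ is positive and whose constant term $g(0)=\det A$ is positive. For the stability half, the hypotheses $d_1\Theta_4>d_2\Theta_1$ and $(d_1\Theta_4-d_2\Theta_1)^2-4d_1 d_2(\Theta_2\Theta_3-\Theta_1\Theta_4)<0$ say precisely that the linear coefficient of $g$ is positive and its discriminant is negative; in fact either one alone already gives $g(\zeta)>0$ for all $\zeta\ge 0$ (a negative discriminant keeps $g$ from ever vanishing, while a positive linear coefficient $d_1\Theta_4-d_2\Theta_1>0$ puts the vertex at $-(d_1\Theta_4-d_2\Theta_1)/(2d_1 d_2)<0$, so that $g$ is increasing on $[0,\infty)$ with $g(0)>0$). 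Hence $\det B_j=g(\zeta_j)>0$ for all $j$, and asymptotic stability follows.

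For the Turing-instability half I would first note that \eqref{assume-theta} already makes the kinetic equilibrium asymptotically stable ($\trace A<0$, $\det A>0$), so it remains only to exhibit an unstable diffusive mode. Route \eqref{2.20b} is immediate: if $\det B_k<0$ for some $k\ge 1$, then $B_k$ has a positive eigenvalue, the mode $\cos(k\pi x/l)$ grows, and the equilibrium is Turing unstable. Route \eqref{2.20} is the mechanism that produces such a $k$: when $d_1\Theta_4-d_2\Theta_1<0$ the vertex of $g$ sits at the positive abscissa $-(d_1\Theta_4-d_2\Theta_1)/(2d_1 d_2)>0$, and a positive discriminant then forces $g$ to have two positive real roots $0<\zeta_-<\zeta_+$ with $g<0$ on the open interval $(\zeta_-,\zeta_+)$; as soon as one eigenvalue $\zeta_k=(k\pi/l)^2$ lands in this band we obtain $\det B_k<0$ and hence instability.

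The main obstacle is precisely this last step: \eqref{2.20} is a statement about the continuous parabola $g$, whereas the admissible wavenumbers are the discrete values $\zeta_k=(k\pi/l)^2$, and for a fixed interval length $l$ it can happen that every $\zeta_k$ misses the band $(\zeta_-,\zeta_+)$. I would therefore present \eqref{2.20} as the condition that \emph{opens} an instability window and \eqref{2.20b} as its rigorous realization, making the passage between them explicit by requiring that $l$ be large enough that some integer $k$ satisfies $\zeta_-<(k\pi/l)^2<\zeta_+$; since the spacing of consecutive $\zeta_k$ near this band tends to zero as $l$ grows, such a $k$ must exist for $l$ sufficiently large. With that caveat in place, both routes deliver a matrix $B_k$ with an eigenvalue of positive real part, and the Casten--Holland criterion completes the proof.
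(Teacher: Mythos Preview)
Your approach is the same as the paper's: reduce everything to the sign of $\det B_j$ via the Casten--Holland criterion after observing that $\trace B_j<0$ under \eqref{assume-theta}, and then read off the signs from the quadratic $g(\zeta)=d_1d_2\zeta^2+(d_1\Theta_4-d_2\Theta_1)\zeta+\det A$.

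Your treatment of the instability half is in fact more careful than the paper's. The paper argues that ``$\det B_j<0$ for all sufficiently large $j$ if $d_1\Theta_4-d_2\Theta_1<0$, since the eigenvalues $\zeta_j$ are monotone increasing with limit $\infty$,'' which is incorrect: the leading term $d_1d_2\zeta_j^2$ dominates, so $\det B_j\to+\infty$ and is eventually positive. You correctly recognize that \eqref{2.20} only opens a finite window $(\zeta_-,\zeta_+)$ where $g<0$, and that for a fixed $l$ no discrete eigenvalue $\zeta_k=(k\pi/l)^2$ need land there; you then resolve this by passing to \eqref{2.20b} (or by taking $l$ large) to actually produce an unstable mode. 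This is the right reading of the theorem: \eqref{2.20} is the mechanism, \eqref{2.20b} is the rigorous sufficient condition.
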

\subsection{Pattern formation}
For a nonnegative real parameter $\lambda$
consider the reaction-diffusion system to find
$\bu: (0,l)\times (0,\infty) \rightarrow \mathbb R^{n}$ such that
\begin{equation}\label{3.1}
\bu_{t}=\bF(\bu;\lambda )+D(\lambda )\frac{\partial^{2}\bu}{\partial x^{2}},
\end{equation}
where $D$ is a non-negative diagonal matrix
depending smoothly on  $\lambda$ and
$\bF:\mathbb R^{n}\times \lbrack 0,\infty )\rightarrow \mathbb R^{n}$
is a smooth function. Suppose
\eqref{3.1} is equipped with the Neumann boundary condition
\begin{equation}
\bu_{x}(0,t)=\bu_{x}(l,t)=\b0.  \label{3.2}
\end{equation}
Assume further that for some $\overline{\bu}\in \mathbb R^{n}$ we have
$\bF(\overline{\bu};\lambda)=0$ for all $\lambda \in [0,\infty),$ {\it
  i.e.}
$\overline{\bu}$ is a parameter-independent constant stationary solution of
\eqref{3.1}--\eqref{3.2}.

\begin{definition} We say that $\overline{\bu}$ undergoes a Turing
bifurcation at $\lambda_0 \in \lbrack 0,\infty )$ if
the solution $\overline{\bu}$ is asymptotically stable for $0<\lambda <\lambda_{0},$
while it is unstable for $\lambda_{0}<\lambda,$ (or vice versa, {\it i.e.}
the regions for asymptotical stability and instability may be exchanged),
and in some neighborhood of $\lambda_{0}$ the problem
(\ref{3.1})-(\ref{3.2})
has non-constant stationary solution ({\it i.e.} solution which does not
depend on time but depends on space.)
\end{definition}

With $d_1$ fixed, regarding $d_2$ as the parameter $\lambda$,
we will consider the linearized system (\ref{2.7})-(\ref{2.8}) as
a parameter-dependent problem in the setting \eqref{3.1}--\eqref{3.2}.
Notice that $\bu(x,t)=(0,0)^T$ is clearly a solution for (\ref{2.7})-(\ref{2.8}).
Then the condition for a Turing bifurcation for
the linearized system (\ref{2.7})-(\ref{2.8}) is given as follows:

\begin{theorem}\label{thm3} Suppose that $\trace  A<0$  and
$\det A>0.$
\\
{\rm (i)} If
\begin{equation}
d_{1}\geq \frac{\Theta_{1}}{\zeta_{1}},  \label{3.3}
\end{equation}
then the zero solution of the linear problem (\ref{2.7})-(\ref{2.8})
is asymptotically stable for all $d_{2}>0.$
\\
{\rm (ii)} If
\begin{equation}
\frac{\Theta_{1}}{\zeta_{2}} \le d_{1} < \frac{\Theta_{1}}{\zeta_{1}},  \label{3.4}
\end{equation}
then the zero solution of the linear problem (\ref{2.7})-(\ref{2.8})
undergoes a Turing bifurcation at
\begin{equation}
d_{2}:=d_{2crit}=\frac{\Theta_{2}\Theta_{3}-\Theta_{1}\Theta_{4}+\zeta
_{1}d_{1}\Theta_{4}}{\zeta_{1}(\Theta_{1}-\zeta_{1}d_{1})}.
\label{3.5}
\end{equation}
\end{theorem}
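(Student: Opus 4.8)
The plan is to reduce the whole statement to tracking the sign of $\det B_j$, since the trace is already under control. By hypothesis $\trace A=\Theta_1-\Theta_4<0$, and because $\zeta_j,d_1,d_2\ge 0$, formula \eqref{2.17a} gives $\trace B_j=\trace A-\zeta_j(d_1+d_2)<0$ for every $j$. By the Casten--Holland criterion recalled after \eqref{2.14}, the zero solution is therefore asymptotically stable exactly when $\det B_j>0$ for all $j$, and unstable as soon as $\det B_k<0$ for some $k$. So everything rests on the single quadratic \eqref{2.17b}, which I would regard as an \emph{affine} function of $d_2$ for each fixed mode:
\[
\det B_j=\zeta_j(d_1\zeta_j-\Theta_1)\,d_2+(\zeta_j d_1\Theta_4+\det A).
\]
Since $\Theta_4>0$ (by \eqref{assume-gamma-delta}, as in the Remark) and $\det A>0$, the constant term is strictly positive, so the behaviour of $\det B_j$ in $d_2$ is governed entirely by the coefficient $\zeta_j(d_1\zeta_j-\Theta_1)$, equivalently by whether $d_1\gtrless\Theta_1/\zeta_j$.

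For part (i), I would observe that $d_1\ge\Theta_1/\zeta_1$ together with the monotonicity $0<\zeta_1<\zeta_2<\cdots$ forces $d_1\ge\Theta_1/\zeta_j$ for every $j\ge 1$ (trivially so if $\Theta_1\le 0$). Hence each coefficient $\zeta_j(d_1\zeta_j-\Theta_1)$ is nonnegative, the affine function is nondecreasing in $d_2$, and $\det B_j\ge \zeta_j d_1\Theta_4+\det A>0$ for all $d_2>0$; the mode $j=0$ is covered separately by $\det B_0=\det A>0$. Asymptotic stability for all $d_2>0$ follows.

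For part (ii), the interval $\Theta_1/\zeta_2\le d_1<\Theta_1/\zeta_1$ (which forces $\Theta_1>0$) is tailored so that exactly one mode can destabilise, and I would treat the modes in three groups. For $j=0$, $\det B_0=\det A>0$ always. For $j\ge 2$, the inequalities $d_1\ge\Theta_1/\zeta_2$ and $\zeta_j\ge\zeta_2$ give $d_1\zeta_j-\Theta_1\ge 0$, so as in part (i) these modes keep $\det B_j>0$ for every $d_2>0$. For $j=1$, the strict inequality $d_1<\Theta_1/\zeta_1$ makes the coefficient $\zeta_1(d_1\zeta_1-\Theta_1)$ strictly negative, so $\det B_1$ is a strictly decreasing affine function of $d_2$, positive at $d_2=0$; it has a unique positive root, and solving $\det B_1=0$ for $d_2$ returns precisely $d_{2crit}$ of \eqref{3.5}. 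Thus $\det B_1>0$ for $d_2<d_{2crit}$ (all modes stable, zero solution asymptotically stable) while $\det B_1<0$ for $d_2>d_{2crit}$ (a positive eigenvalue of $B_1$, zero solution unstable).

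To match the definition of a Turing bifurcation I would finally exhibit the non-constant stationary solution at $\lambda_0=d_{2crit}$: there $\det B_1=0$, so $B_1$ is singular with a nonzero kernel vector $\bw$, and $\bv(x,t)=\bw\cos(\pi x/l)$ satisfies $\bv_t=\b0$ and $A\bv+D\bv_{xx}=(A-\zeta_1 D)\bw\cos(\pi x/l)=B_1\bw\cos(\pi x/l)=\b0$, a genuinely space-dependent stationary solution of \eqref{2.7}--\eqref{2.8}. Together with the stability exchange across $d_{2crit}$ this is exactly a Turing bifurcation. The main obstacle is the bookkeeping that singles out $j=1$: one must verify that the prescribed range of $d_1$ keeps every higher mode strictly positive while driving only the first mode through zero, which is where the ordering $\zeta_1<\zeta_2<\cdots$ and the sign of $\Theta_1$ must be handled carefully.
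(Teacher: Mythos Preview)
Your argument is correct and follows essentially the same route as the paper: both proofs reduce to the sign of $\det B_j$ via the Casten--Holland criterion, use the rewriting of $\det B_j$ as (affine in $d_2$) to show that condition \eqref{3.3} forces positivity for all modes while condition \eqref{3.4} singles out $j=1$ as the unique mode whose determinant crosses zero at $d_{2crit}$, and then exhibit the kernel vector of $B_1$ times $\cos(\pi x/l)$ as the non-constant stationary solution. Your presentation is slightly more explicit (handling $j=0$ separately, noting that \eqref{3.4} forces $\Theta_1>0$), but the substance is identical to the paper's proof.
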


\begin{proof} {\rm (i)} Rewriting (\ref{2.17b}) as
\begin{equation*}
\det B_{j}=\Theta_{2}\Theta_{3}-\Theta_{1}\Theta_{4}+\zeta
_{j}d_{1}\Theta_{4}-\zeta_{j}d_{2}(\Theta_{1}-\zeta_{j}d_{1}),
\end{equation*}
we see from \eqref{assume-gamma-delta} and \eqref{assume-theta} that
$\det B_{j}>0$ for all $j=0,1,2,\cdots$ if $d_{1}\geq \Theta_{1}/\zeta_{1}$
holds,
since $\zeta_{j},j=0,1,2,\cdots$ forms a monotone increasing sequence
(\ref{3.3}).
Therefore, the zero solution of (\ref{2.7})-(\ref{2.8}) is asymptotically
stable under such conditions.

{\rm (ii)} Suppose $d_{1}$ satisfies (\ref{3.4}) and choose $\lambda=d_{2}$
as given in \eqref{3.5}. Then $\det B_{1}=0.$ Clearly, we have
$\det B_{1}>0$ for $0<d_{2}<d_{2crit}$, and $\det B_{1}<0$ for
$d_{2crit}<d_{2}$.
In both cases $\det B_{j}>0,j\neq 1.$
Again by Casten and Holland \cite{Casten} as quoted just after formula
(\ref{2.14}),
the zero solution is asymptotically stable for $0<d_{2}<d_{2crit}$,
and it is unstable for $d_{2crit}<d_{2}$.
If $d_{2}=d_{2crit}$, one eigenvalues of $B_{1}$ becomes zero and the other is
$\trace{B_1}$, which is negative.
Denote the eigenvector corresponding to the zero eigenvalue by
$\by_{11}=(\eta_{1},\eta_{2})^T$, {\it i.e.}
\begin{equation*}
B_{1}\by_{11}=(A-\zeta_{1}D)\by_{11}=0,\quad \by_{11}\neq \b0.
\end{equation*}
As we can see from (\ref{2.9})-(\ref{2.12}) the function
\begin{equation*}
\bv_{1}(x,t):=\by_{11}\psi_{1}(x)=\left[
\begin{array}{c}
\eta_{1} \\
\eta_{2}
\end{array}
\right] \cos \frac{\pi x}{l},
\end{equation*}
is a spatially non-constant stationary solution of the linearized problem
(\ref{2.7})-(\ref{2.8}).
This implies that the zero solution undergoes Turing
bifurcation
at $d_{2crit}$. This completes the proof.
\end{proof}

In the remaining part of this section we will extend the latter
result about the Turing bifurcation of the zero solution of the
linearized system to the non-linear problem (\ref{2.3}). For this
we need the following:

\begin{theorem}\label{thm4} Let $X$ and $Y$ be Banach spaces,
$U=V\times S$  an open subset of $X \times \mathbb R$, and $\mathbf f\in C^{2}(U;Y)$%
 such that $\mathbf f(\b0,\lambda)=\b0,\lambda \in S\subset \mathbb R.$
Denote by $L_{10}=\mathbf f_{\bv}(\b0,\lambda_{0})$ and $L_{12}=\mathbf f_{\bv,\lambda}(\b0,\lambda_{0})$
the linear operators obtained by differentiating $\mathbf f$ with
respect to its first variable only and the first and second variables at
$\bv=\b0\in V, \lambda_{0}\in S$, respectively. Assume
that the following conditions hold:

{\rm (i)} the kernel of $L_{10}$, the subspace $N(L_{10})$
of $X$ is a one-dimensional vector space spanned by $\bv_{1}\in X$;

{\rm (ii)} the range of $L_{10}$, the subspace $R(L_{10})$
 of $Y$ has codimension $1$, {\it i.e.}
$dim[Y/R(L_{10})]=1$;

{\rm (iii)} $L_{12}\bv_{1}\notin R(L_{10})$.

Let $Z$ be an arbitrary closed subspace of $X$
such that $X=[Span$ $\bv_{1}]\oplus Z;$ then there is a
$\delta >0$ and $C^{1}$-curve 
$(\phi, \lambda):(-\delta ,\delta)\rightarrow Z\times S$ such that;
$\phi (\b0)=\b0;$ $\lambda (0)=\lambda_{0};$
$\mathbf f(s\bv_{1}+s\phi (s), \lambda(s))=\b0$ for $|s| <\delta.$
Furthermore, there is a neighborhood of $(\b0,\lambda_{0})$
such that any zero of $\mathbf f$ either lies on this curve or
is of the form $(\b0,\lambda_{0}).$
\end{theorem}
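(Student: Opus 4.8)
The plan is to recognize this as the Crandall--Rabinowitz theorem on bifurcation from a simple eigenvalue and to prove it by a rescaling (Lyapunov--Schmidt-type) reduction that converts the degenerate linearization at $(\b0,\lambda_0)$ into a nondegenerate problem to which the Implicit Function Theorem applies. The motivating observation is that, by (i), any nontrivial zero of $\mathbf f$ bifurcating from $(\b0,\lambda_0)$ should have leading order along $\bv_1$; so I look for zeros of the prescribed form $\bv=s(\bv_1+\phi)$ with $\phi\in Z$ and small scalar $s$, and I factor out the trivial multiple $s$.

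Concretely, first I would introduce $\mathbf G:\mathbb R\times Z\times S\rightarrow Y$ defined by
\[
\mathbf G(s,\phi,\lambda)=\int_{0}^{1}\mathbf f_{\bv}\bigl(ts(\bv_1+\phi),\lambda\bigr)(\bv_1+\phi)\,dt,
\]
which for $s\ne 0$ equals $s^{-1}\mathbf f(s(\bv_1+\phi),\lambda)$ (using $\mathbf f(\b0,\lambda)=\b0$ and the fundamental theorem of calculus) and at $s=0$ reduces to $\mathbf f_{\bv}(\b0,\lambda)(\bv_1+\phi)$. Because $\mathbf f\in C^{2}$, the integrand is jointly $C^{1}$ in $(s,\phi,\lambda)$ and one may differentiate under the integral sign, so $\mathbf G$ is $C^{1}$ \emph{across} $s=0$; securing this smoothness is the one genuinely delicate point, since the naive quotient $s^{-1}\mathbf f$ is undefined at the bifurcation value. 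At the base point one has $\mathbf G(0,\b0,\lambda_0)=L_{10}\bv_1=\b0$ by (i), while differentiating in the remaining variables gives $D_{\phi}\mathbf G(0,\b0,\lambda_0)\psi=L_{10}\psi$ and $D_{\lambda}\mathbf G(0,\b0,\lambda_0)=L_{12}\bv_1$, so that
\[
D_{(\phi,\lambda)}\mathbf G(0,\b0,\lambda_0)(\psi,\mu)=L_{10}\psi+\mu\,L_{12}\bv_1,\qquad(\psi,\mu)\in Z\times\mathbb R.
\]

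Next I would verify that this bounded linear map $Z\times\mathbb R\rightarrow Y$ is an isomorphism. For injectivity: if $L_{10}\psi+\mu L_{12}\bv_1=\b0$ with $\mu\ne0$, then $L_{12}\bv_1\in R(L_{10})$, contradicting (iii); hence $\mu=0$, $L_{10}\psi=\b0$, and $\psi\in N(L_{10})\cap Z=\span\{\bv_1\}\cap Z=\{\b0\}$. For surjectivity: since $Z$ complements $N(L_{10})$, the restriction $L_{10}|_{Z}$ already has range $R(L_{10})$, and by (ii) with (iii) the vector $L_{12}\bv_1$ spans a complement of $R(L_{10})$, so $R(L_{10})\oplus\span\{L_{12}\bv_1\}=Y$. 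The Open Mapping Theorem then makes the map an isomorphism, and the Implicit Function Theorem applied to $\mathbf G$ at $(0,\b0,\lambda_0)$ yields $\delta>0$ and a $C^{1}$ curve $s\mapsto(\phi(s),\lambda(s))\in Z\times S$ with $\phi(0)=\b0$, $\lambda(0)=\lambda_0$ and $\mathbf G(s,\phi(s),\lambda(s))=\b0$ for $|s|<\delta$; for $s\ne0$ this is precisely $\mathbf f(s\bv_1+s\phi(s),\lambda(s))=\b0$.

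Finally, the local structure of the zero set would follow from the uniqueness clause of the Implicit Function Theorem: every zero of $\mathbf G$ near $(0,\b0,\lambda_0)$ lies on the curve, and translating back through $\bv=s(\bv_1+\phi)$ shows that in a neighborhood of $(\b0,\lambda_0)$ any zero of $\mathbf f$ is either trivial, of the form $(\b0,\lambda)$, or lies on the bifurcating branch. I expect the $C^{1}$ regularity of $\mathbf G$ at $s=0$ to be the main obstacle; once the integral representation delivers it, the remainder is linear algebra driven by (i)--(iii) together with the Implicit Function Theorem.
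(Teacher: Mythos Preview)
Your proposal is correct and follows essentially the same route as the paper: the paper's proof merely introduces the rescaled map
\[
\bF(\lambda,s,\mathbf z)=\begin{cases} s^{-1}\mathbf f(s\bv_1+s\mathbf z,\lambda), & s\neq 0,\\ L_{10}(\bv_1+\mathbf z), & s=0,\end{cases}
\]
and then defers all details to Smoller (pp.~172--173), whereas you write out the same function via its integral representation (which is the clean way to secure $C^1$ regularity across $s=0$), compute the relevant derivatives, and verify that $D_{(\phi,\lambda)}\mathbf G(0,\b0,\lambda_0)$ is a bijection so the Implicit Function Theorem applies. Your treatment is in fact more complete than what the paper records; the only cosmetic point is that in the final uniqueness clause you (correctly) describe the trivial zeros as the whole branch $(\b0,\lambda)$ rather than the single point $(\b0,\lambda_0)$ appearing in the statement.
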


\begin{proof} The idea of the proof is to introduce a new parameter $s$
which enables to apply immediately the implicit function theorem for the
function $\bF\in C^{1}(U\times Z,Y)$ defined by
\begin{equation*}
\bF(\lambda,s,z):=
\begin{cases}
\frac1{s} \mathbf f(s\bv_{1}+s\mathbf z,\lambda) & \text{ if }s\neq 0, \\
L_{10}(\bv_{1}+\mathbf z), & \text{ if }s=0.
\end{cases}
\end{equation*}%
See, for the details of the proof of the theorem,  pp. 172--173 of \cite{Smoller}.
\end{proof}

\begin{remark} In what follows the role of the space $X$ will be played by
\begin{equation}
X=\left\{\bV\in C^{2}([0,l];\mathbb R^{2}):\text{ }\bV_{x}(0)=\bV_{x}(l)=\b0\right\}  \label{3.7}
\end{equation}
with the norm
$\left\Vert \mathbf f\right\Vert_{X}=\sum_{0\leq \alpha \leq 2}
\sup_{x\in [0,l]} | \partial^{\alpha }\mathbf f(x)|,$
where $|\cdot|$ denotes the
usual vector or matrix-norm, while $Y=C^{0}([0,l],\mathbb R^{2})$ with
the norm $\| \mathbf f\|_{Y}=\sup_{x\in [0,l]}| \mathbf f(x)|.$ However, in choosing the subspace $Z$
of $X$ we shall use the orthogonality induced by the inner product
\begin{equation*}\label{3.8}
\left\langle \bv,\bw\right\rangle
=\int\limits_{0}^{l}\left[v_{1}(x)w_{1}(x)+v_{2}(x)w_{2}(x)\right]dx,\quad\text{for
}\bv=(v_{1},v_{2})^T, \bw=(w_{1},w_{2})^T.
\end{equation*}
\end{remark}

\begin{theorem}\label{thm5} Suppose that $\trace A<0$ and $\det A>0.$

{\rm (i)} If (\ref{3.3}) holds, then the constant solution $\overline{\bu}%
=(\overline{N},\overline{P})^T$ of the nonlinear problem (\ref{2.3})
is asymptotically stable.

{\rm (ii)} If $(0,\eta_{2})^T$ is not parallel to the second eigenvector 
$\by_{21}$ of $B_{1}$ and $d_{1}$
satisfies (\ref{3.4}), then at $d_{2}=d_{2crit}$ the constant
solution $\overline{\bu}$ undergoes a Turing bifurcation.
\end{theorem}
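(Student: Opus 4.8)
The plan is to obtain part~(i) directly from the linearized analysis of \thmref{thm3} via the principle of linearized stability, and to establish part~(ii) by recasting the existence of non-constant steady states as a bifurcation problem to which the Crandall--Rabinowitz theorem \thmref{thm4} applies, with $X$, $Y$ as in the Remark above and bifurcation parameter $\lambda = d_2$. For part~(i), condition (\ref{3.3}) together with \thmref{thm3}(i) forces every $B_j$ to have both eigenvalues in the open left half-plane (each has negative trace and positive determinant), so the linearization $A + D\,\partial_x^2$ of (\ref{2.3}) at $\overline{\bu}$, acting under Neumann conditions, has spectrum contained in a half-plane $\{\Re\mu \le -c < 0\}$. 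Since $\bF$ is smooth near the positive equilibrium (there $\overline N,\overline P > 0$, hence $N+P>0$), the principle of linearized stability for the analytic semigroup generated by this operator then yields local asymptotic stability of $\overline{\bu}$ for the full nonlinear system.

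For part~(ii), I would define $\mathbf f(\bv,\lambda) = \bF(\overline{\bu}+\bv) + D(\lambda)\,\partial_x^2\bv$, with $D(\lambda) = \operatorname{diag}(d_1,\lambda)$ and $\lambda_0 = d_{2crit}$; this $\mathbf f$ is $C^2$ near $\b0$ because $\bF$ is smooth there, it satisfies $\mathbf f(\b0,\lambda) = \b0$ since $\bF(\overline{\bu}) = \b0$, and its zeros near $\b0$ are precisely the stationary states of (\ref{2.3}) near $\overline{\bu}$. Differentiating gives $L_{10} = A + D(\lambda_0)\,\partial_x^2$ and $L_{12} = \operatorname{diag}(0,1)\,\partial_x^2$. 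Expanding in the Neumann eigenbasis $\cos(j\pi x/l)$, the operator $L_{10}$ acts on the $j$-th mode by the matrix $B_j$; since at $d_{2crit}$ one has $\det B_1 = 0$ but $\det B_j \ne 0$ for $j\ne1$ (exactly as in the proof of \thmref{thm3}(ii)), the kernel $N(L_{10})$ is spanned by the single function $\bv_1 = \by_{11}\cos(\pi x/l)$, which gives hypothesis~(i). A datum $\mathbf g \in Y$ lies in $R(L_{10})$ iff each modal equation $B_j\bw_j = \mathbf g_j$ is solvable; this is automatic for $j\ne1$ and for $j=1$ imposes the single scalar constraint $\mathbf g_1 \in R(B_1)$, so $R(L_{10})$ has codimension one, which is hypothesis~(ii). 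Finally $L_{12}\bv_1 = -\zeta_1(0,\eta_2)^T\cos(\pi x/l)$ carries only the $j=1$ mode, hence belongs to $R(L_{10})$ exactly when $(0,\eta_2)^T \in R(B_1)$; but $\det B_1 = 0$ with $\trace B_1 \ne 0$ forces $R(B_1) = \operatorname{span}\{\by_{21}\}$, so hypothesis~(iii), $L_{12}\bv_1 \notin R(L_{10})$, is equivalent to the stated assumption that $(0,\eta_2)^T$ is not parallel to $\by_{21}$. Taking $Z$ to be the $L^2$-orthogonal complement of $\operatorname{span}\{\bv_1\}$ within $X$ (via the inner product of the Remark), \thmref{thm4} then delivers a $C^1$-curve $s \mapsto \overline{\bu} + s\bv_1 + s\phi(s)$ of stationary states through $(\b0,\lambda_0)$, non-constant in $x$ for small $s\ne0$. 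Together with the exchange of stability across $d_{2crit}$ (supplied by \thmref{thm3}(ii) and the same linearization principle, using that $\det B_1<0$ produces a positive eigenvalue of $B_1$ once $d_2>d_{2crit}$), this is precisely a Turing bifurcation.

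The step I expect to be the main obstacle is the rigorous verification of hypothesis~(ii) in the stated $C^2\!\to\!C^0$ functional setting: the mode-by-mode solvability argument is immediate in $L^2$, but one must confirm, by elliptic (Schauder-type) regularity, that once the single orthogonality constraint on the $j=1$ coefficient is met the formal Fourier series genuinely converges in $X = C^2([0,l];\mathbb R^2)$ and that the complementary projection onto $R(L_{10})$ is bounded, so that $L_{10}$ is indeed Fredholm of index zero from $X$ to $Y$. The remaining ingredients---the kernel computation, the form of $L_{12}$, and the identification of $R(B_1)$---are routine $2\times 2$ linear algebra.
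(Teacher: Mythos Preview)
Your proposal is essentially the same argument as the paper's: part~(i) by linearized stability (the paper simply cites Casten--Holland, which is the content of your semigroup remark), and part~(ii) by reducing the stationary problem to an operator equation and applying the Crandall--Rabinowitz theorem (\thmref{thm4}) with bifurcation parameter $d_2$, identical computation of $L_{10}$, $L_{12}$, kernel spanned by $\by_{11}\cos(\pi x/l)$, and the transversality check via $(0,\eta_2)^T\nparallel\by_{21}$. The only cosmetic difference is the choice of complement: the paper takes $Z=R(L_{10})$, while you take the $L^2$-orthogonal complement of $\bv_1$ in $X$; your choice is arguably cleaner since $Z$ must lie in $X$, and the technical concern you flag about verifying codimension-one range in the $C^2\!\to\!C^0$ setting is exactly the point the paper handles only heuristically via the eigenfunction expansion.
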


\begin{proof} {\rm (i)} follows immediately from the asymptotic stability of
the zero solution of the linear problem \textit{\ }(\ref{2.7})-(\ref{2.8})%
\textit{.}

{\rm (ii)} As in the proof of {\rm (i)} of \thmref{thm3}, we have that
 $\overline{\bu}$ is asymptotically stable for $d_{2}\in (0,d_{2crit})$, while
 it is unstable  for $d_{2}\in (d_{2crit},\infty )$.
We have to show the existence of a
stationary non-constant solution in some neighborhood of the critical value
$d_{2crit}.$ Such a stationary solution $\overline{\bu}$ satisfies
the following system of second-order partial differential equations
\begin{equation}
D\bu_{xx}+\bF(\bu)=0,\quad \bu_{x}(0)=\bu_{x}(l)=\b0.  \label{3.9}
\end{equation}
We consider (\ref{3.9}) as an operator equation on the Banach space $X$
given by (\ref{3.7}), and we apply \thmref{thm3} with $d_{2}$ as the bifurcation
parameter.
Set
$\bv:=\bu-\overline{\bu}$. Then (\ref{3.9}) assumes the equivalent form
\begin{equation}
D\bv_{xx}+A\bv+\bG(\bv)=0,\text{ \ }\bv_{x}(0)=\bv_{x}(l)=\b0.  \label{3.10}
\end{equation}
where $A$ is the Jacobian matrix of $\bF$ evaluated at $\overline{\bu}$ and
\begin{equation}
\bG(\bv)=\bF(\bv+\overline{\bu})-A\bv,\quad \bG(\b0)=\b0,\bG_{\bv}(\b0)=\b0.  \label{3-11}
\end{equation}
Denote the left hand side of (\ref{3.10}) by $T(\bv, d_{2}),$ where
$T$ is a
one-parameter family of operators acting on $X$ and taking its elements into
$Y=C^{0}([0,l];\mathbb R^{2}).$
Clearly, $T$ is a $C^{2}$ mapping.
The spectrum of the linear operator
$L_{10}=T_{\bv}(\b0,d_{2crit})=\frac{\partial T}{\partial \bv}(\b0, d_{2crit})$
consists of the eigenvalues $\mu_{ij}$  of the matrices
$B_{j}$ given by (\ref{2.14}) with its corresponding eigenfunctions are
$\psi_{j}(x)y_{ij},$ $i=1,2, j=0,1,2,\cdots,$
where $\psi_{j}$ is given by (\ref{2.12})
and $\by_{ij}$ is the eigenvector of the matrix $B_{j}$ corresponding to the
eigenvalues $\mu_{ij}$ (see \eqref{2-13}). Now, all
matrices $B_{j}=A-\zeta_{j}D$ are to be taken at $d_{2}=d_{2crit}$.
As it can be seen from the proof of \thmref{thm3} and from \eqref{2.17}
for $i=1,2;$ for all nonnegative integer $j$ except $j=1,$ all $\mu_{ij}$
have negative real parts. For $j=1$ one eigenvalue $\mu_{11}$ is equal to $0$
and the other $\mu_{21}$ is negative. The eigenfunction corresponding to $\mu_{11}=0$ is
$\bv_{1}=\by_{11}\cos (\pi x/l).$ Thus, the null-space of the operator
$L_{10}=T_{\bv}(\b0,d_{2crit})$ is a one-dimensional linear space spanned by
$\bv_{1}.$
Owing to the orthogonality and completeness of the eigenfunction system of the operator
$-\frac{\partial^2}{\partial x^2}$,  the range of this operator is given by
\begin{eqnarray*}
R(L_{10}) &=&\big\{\bw\in C^{0}([0,l];\mathbb R^{2}):\text{ the eigenfunction expansion of}
\\
&&\qquad \bw\text{ does not contain } \cos \frac{\pi x}{l}    \big\} \cup
\span\big\{\by_{21} \cos \frac{\pi x}{l} \big\},
\end{eqnarray*}%
so that the codimension of $R(L_{10})$ is one.

Let $L_{12}=\frac{\partial T_{\bv}}{\partial d_{2}} (0, d_{2crit}).$ Then
\begin{equation*}
L_{12}=D^{^{\prime }}\frac{\partial^{2}}{\partial x^{2}}\text{ \ \ where \ }%
D^{^{\prime }}=\frac{\partial D}{\partial d_{2}}=\left[
\begin{array}{cc}
0 & 0 \\
0 & 1%
\end{array}%
\right] .
\end{equation*}%
Clearly,
\begin{equation*}
L_{12}\bv_{1}=-\(\frac{\pi }{l}\)^{2}\cos \frac{\pi x}{l}  D^{^{\prime }} \by_{11}
=-\(\frac{\pi }{l}\)^{2}  \cos \frac{\pi x}{l} \left[
\begin{array}{c}
0 \\
\eta_{2}%
\end{array}%
\right].
\end{equation*}%
Under the assumption $L_{12}\bv_{1}\nparallel $ $\by_{21}\cos \frac{\pi x}{l}$,
we see that $L_{12}\bv_{1}$ does not belong to $R(L_{10}),$ fulfilling
the condition {\rm (iii)} of \thmref{thm4}.

Letting
\begin{equation*}
Z=R(L_{10}),
\end{equation*}
which is a closed subspace of $Y$,
we verify that all the hypotheses of \thmref{thm4} hold; moreover,
$(\b0, d_{2crit})$ is a bifurcation point, and there exist a $\delta >0,$ a
function
$d_{2}:(-\delta ,\delta )\rightarrow \mathbb R$ such that for
$s\in (-\delta ,\delta )$
\begin{equation*}
\bv(x;s)=s\by_{11}\cos \frac{\pi x}{l}+s\phi (x;s)
\end{equation*}
is a solution of (\ref{3.10}) with
$d_{2}=d_{2}(s)$, $\left| s\right| <\delta$,
$d_{2}(0)=0,\phi (x;0)=\b0,$ and $d_{2}\in C^{1},\phi (x;\cdot)\in C^{1},\phi
(\cdot;s)\in Z.$
\end{proof}
\begin{remark} The corresponding solution of (\ref{3.9}), {\it i.e.} the
non-constant stationary solution of the nonlinear parabolic system (\ref{2.3})
is
\begin{equation}\label{pattern}
\bu(x;s)=\overline{\bu}+s\mathbf y_{11}\cos \frac{\pi x}{l}+O(s^{2}),
\end{equation}
(corresponding to the choice $d_{2}=d_{2}(s),\left\vert s\right\vert <\delta
$), {\it i.e.}
\begin{subeqnarray}\label{3.25}
N(x) &=&\overline{N}+s\eta_{1}\cos \frac{\pi x}{l}+O(s^{2}), \\
P(x) &=&\overline{P}+s\eta_{2}\cos \frac{\pi x}{l}+O(s^{2}).
\end{subeqnarray}%
since $s$ is considered to be small here, this solution is called as a
\textit{small amplitude pattern.}
\end{remark}

\begin{remark} Because of \thmref{thm4}
\textit{(\ref{2.3})} has
no other stationary solution apart from $(\overline{N},\overline{P})$ and 
\eqref{pattern} in a neighborhood of $(\overline{\bu}, d_{2crit})\in \mathbb R\times X.$%
\end{remark}

\begin{remark} In the linear case (by \thmref{thm3}) for the function $d_{2}$%
\ holds: $d_{2}(s)=d_{2crit},$ and a corresponding one parameter family of
solutions is $\overline{\bu}+s\bv_{1}, s\in \mathbb R.$
\end{remark}

\section{Numerical approximation}
\subsection{The numerical scheme}
The reaction-diffusion equations (\ref{2.3}) are solved numerically
using the forward Euler method in time, the centered difference method in
space. This numerical scheme gives a stable solution under a certain 
that stasisfies the CFL (Courant-Friedrichs-Lewy) condition.
The details are as follows.

Consider the computational domain $[0,1]$ and the mesh size $h$
and the time step size $\Delta t,$ which will be determined later in \eqref{eq:Deltat}.
Set $N_h = \frac{1}{h}.$ Denote by $N_j^k$ and $P_j^k$
the numerical approximation of $N(jh,k\Delta t)$,
$P(jh,k\Delta t)$, respectively for $j=0,1,\cdots,N_h$ and $k=1,2,\cdots.$ 
Then, given initial data $N_j^0, P_j^0, j =
0, 1,\cdots, N_h,$ the numerical scheme is to solve
\begin{subeqnarray}\label{eq:NjPj}
N_j^{k+1}&=&N_j^k + {\Delta t} N_j^k \left[1-N_j^k - \frac{\alpha
    P_j^k}{P_j^k + N_j^k}\right] + \Delta t d_1\frac{N_{j-1}^k - 2N_j^k + N_{j+1}^k}{h^2},\\
P_j^{k+1}&=&P_j^k + {\Delta t}\epsilon P_j^k
\left[-\frac{\gamma+\delta\beta P_j^k}{1 + \beta P_j^k} + 
  \frac{N_j^k}{P_j^k + N_j^k}\right] + \Delta t d_2\frac{P_{j-1}^k - 2P_j^k + P_{j+1}^k}{h^2}
\end{subeqnarray}
for $j = 1, 2,\cdots, N_h-1,$ iteratively for $k =1, 2, \cdots.$
On the boundaries $x=0, x=1$ where Neumann condition holds, 
we used a three-point interpolation scheme to guarantee the second-order
accuracy in space as follows:
\begin{subeqnarray}
\label{eq:bc}
N_{2}^k - 4 N_{1}^k + 3 N_{0}^k = 0;&\quad& P_{2}^k - 4 P_{1}^k + 3 P_{0}^k = 0; \\
N_{N_h-2}^k - 4 N_{N_h-1}^k + 3 N_{N_h}^k = 0;&\quad& P_{N_h-2}^k - 4 P_{N_h-1}^k + 3
P_{N_h}^k = 0.
\end{subeqnarray}
We will then establish the
the positivity of the numerical solutions and boundedness 
for the numerical prey solution under certain conditions on $\Delta t$.
Suppose that $0 \leq N_j^k \leq 1$ for $j=1,\cdots, N_h-1.$ 
Then, for $j=2,\cdots,N_h-2,$
\begin{eqnarray}
N_j^{k+1}&=&N_j^{k} + {\Delta t} N_j^k \left[1-N_j^k - \frac{\alpha
    P_j^k}{P_j^k + N_j^k}\right] + \Delta t d_1\frac{N_{j-1}^k - 2N_j^k +
  N_{j+1}^k}{h^2} \nonumber \\
&\leq&  N_j^k + {\Delta t} N_j^k \left[1-N_j^k \right] + 2\frac{\Delta t
    d_1}{h^2}(1- N_j^k)\nonumber  \\
&=& N_j^k + {\Delta t} (1-N_j^k) \left[N_j^k +\frac{2d_1}{h^2}\right] \nonumber \\
&\leq& N_j^k + {\Delta t} (1-N_j^k) (1+\frac{2d_1}{h^2}) \nonumber \\
&=&\left[1-\Delta t(1+\frac{2d_1}{h^2})\right]N_j^{k} + \Delta
    t(1+\frac{2d_1}{h^2})\nonumber  \\
&\leq& 1-\Delta t(1+\frac{2d_1}{h^2}) +\Delta t(1+\frac{2d_1}{h^2}) \leq 1
\label{est:Nj}
\end{eqnarray}
provided  $1-\Delta t(1+\frac{2d_1}{h^2}) \geq 0.$
For $j=1$, owing to the boundary condition \eqref{eq:bc}, 
$N_1^{k+1}$ is given by
\begin{eqnarray}\label{est:N1}
N_1^{k+1}&=&N_1^{k} + {\Delta t} N_1^k \left[1-N_1^k - \frac{\alpha
    P_1^k}{P_1^k + N_1^k}\right] + \Delta t d_1\frac{ 4 (- N_{1}^k + N_2^k)}{3h^2}.
\end{eqnarray}
Hence, the same analysis as above yields, instead of \eqref{est:Nj},
\begin{eqnarray*}
N_1^{k+1} \le \left[1-\Delta t(1+\frac{4d_1}{3h^2})\right] N_1^k +\Delta t (1+\frac{4d_1}{3h^2}) \le 1
\end{eqnarray*}
provided  $1-\Delta t(1+\frac{4d_1}{3h^2}) \geq 0.$
Analgously, one gets
\begin{eqnarray*}
N_{N_h-1}^{k+1}&=&N_{N_h-1}^{k} + {\Delta t} N_{N_h-1}^k \left[1-N_{N_h-1}^k - \frac{\alpha
    P_{N_h-1}^k}{P_{N_h-1}^k + N_{N_h-1}^k}\right] + \Delta t d_1\frac{ 4 (- N_{N_h-1}^k + N_{N_h-2}^k)}{3h^2},
\end{eqnarray*}
and therefore
\begin{eqnarray*}
N_{N_h-1}^{k+1} \le \left[ 1-\Delta t(1+\frac{4d_1}{3h^2})\right] N_{N_h-1}^k +\Delta t (1+\frac{4d_1}{3h^2}) \le 1
\end{eqnarray*}
provided  $1-\Delta t(1+\frac{4d_1}{3h^2}) \geq 0.$
On the other hand, suppose that $0\le N_j^k\le 1$  for $j=1,\cdots, N_h-1.$ 
Then, for $j = 2,\cdots, N_h-2,$
\begin{eqnarray}\label{est:Nj0}
N_j^{k+1}&=&N_j^{k} + {\Delta t} N_j^k \left[1-N_j^k - \frac{\alpha
    P_j^k}{P_j^k + N_j^k}\right] + \Delta t d_1\frac{N_{j-1}^k - 2N_j^k +
  N_{j+1}^k}{h^2}\nonumber \\
&\geq& N_j^k + \Delta t N_j^k(1-N_j^k) - \Delta t \alpha N_j^k -2\frac{\Delta t
  d_1}{h^2}N_j^k\nonumber\\
&\ge&(1+\Delta t-\Delta t\alpha)N_j^k-\Delta t N_j^k - 2\frac{\Delta t
  d_1}{h^2}N_j^k\nonumber\\
&=&\left[1-\Delta t \alpha-\Delta t\frac{2d_1}{h^2}\right]N_j^{k}
 \geq  0,
\end{eqnarray}
provided  $1-\Delta t \alpha-\Delta t\frac{2d_1}{h^2} \geq 0.$
Next for $j=1$, by using \eqref{est:N1}, the procedure to get the
estimate \eqref{est:Nj0} leads to
\begin{eqnarray}
N_1^{k+1}\ge \left[1-\Delta t \alpha-\Delta t\frac{4d_1}{3h^2}\right]N_1^{k}
 \geq  0
\end{eqnarray}
provided
$1-\Delta t \alpha-\Delta t\frac{4d_1}{3h^2} \geq 0.$ Similarly, under
the same conditions, one obtains
\begin{eqnarray}
N_{N_h-1}^{k+1}\ge \left[1-\Delta t \alpha-\Delta t\frac{4d_1}{3h^2}\right]N_{N_h-1}^{k}
 \geq  0.
\end{eqnarray}

Next, suppose that $P_j^k \geq 0$ and $0 \leq N_j^k\leq 1$  for
$j=1,\cdots,N_h-1.$ Recalling \eqref{assume-gamma-delta}, 
one then obtains, for $j=2,\cdots,N_h-2,$
\begin{subeqnarray}
P_j^{k+1}&=&P_j^k + {\Delta t}\epsilon P_j^k
\left[-\delta +\frac{\delta-\gamma}{1 + \beta P_j^k} + 
  \frac{N_j^k}{P_j^k + N_j^k}\right] + \Delta t d_2\frac{P_{j-1}^k - 2P_j^k +
  P_{j+1}^k}{h^2} \nonumber \\
&\geq&P_j^k -  {\Delta t}\epsilon \delta P_j^k
 - \frac{2\Delta t d_2}{h^2} P_j^k \nonumber\\
&\geq& (1-\epsilon \delta \Delta t - \frac{2\Delta t\, d_2}{h^2})P_j^k
\geq 0 \label{est:Pj}
\end{subeqnarray}
provided $1-\epsilon\delta \Delta t - \frac{2\Delta t\, d_2}{h^2}\geq 0$.
For $j=1$ and $j=N_h-1$, taking into account of
the boundary condition \eqref{eq:bc}, one gets
\begin{subeqnarray}
P_j^{k+1}
\geq (1-\epsilon \delta \Delta t - \frac{4\Delta t\, d_2}{3h^2})P_j^k
\geq 0 \quad{for } j = 1\text{ and }j = N_h-1
\end{subeqnarray}
provided $1-\epsilon\delta \Delta t - \frac{4\Delta t\, d_2}{3h^2}\geq 0$.
Collecting all the above results, we are now in a position to state the
following theorem:
\begin{theorem} Let $0\le N_j^0\le 1, 0\le P_j^0$ for $j=0,\cdots,N_h.$
Suppose that
\begin{eqnarray}\label{eq:Deltat}
\Delta t \leq \min\left(\frac{h^2}{\alpha h^2 +2d_1},\frac{h^2}{ h^2
    +2d_1},\frac{h^2}{\epsilon \delta h^2 +2d_2}\right).
\end{eqnarray}
Then the numerical solutions $N_j^k$ and $P_j^k$ obtained iteratively by
\eqref{eq:NjPj} and \eqref{eq:bc} satisfies that
\begin{eqnarray}
0\le N_j^k\le 1,\quad 0\le P_j^k, \quad\text{for }j=1,\cdots,N_h-1,
 \quad\text{for }k=0,1,2,\cdots.
\end{eqnarray}
\end{theorem}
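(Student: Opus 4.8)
The plan is to prove the theorem by induction on the time level $k$, since every inequality needed for the inductive step has already been derived in the estimates \eqref{est:Nj}, \eqref{est:N1}, \eqref{est:Nj0}, and \eqref{est:Pj} preceding the statement. The base case $k=0$ is exactly the hypothesis $0\le N_j^0\le 1$ and $0\le P_j^0$. For the inductive step I would assume that $0\le N_j^k\le 1$ and $0\le P_j^k$ hold for all $j=1,\dots,N_h-1$ and then establish the same bounds at level $k+1$, treating in turn the upper bound $N_j^{k+1}\le1$, the lower bound $N_j^{k+1}\ge0$, and the positivity $P_j^{k+1}\ge0$ (no upper bound on $P$ is claimed).

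A point worth making explicit first is that the induction hypothesis, stated only for the interior indices $j=1,\dots,N_h-1$, is exactly self-propagating. For $j=2,\dots,N_h-2$ the update \eqref{eq:NjPj} involves only $N_{j-1}^k, N_j^k, N_{j+1}^k$ (and likewise for $P$), all of whose indices lie in $\{1,\dots,N_h-1\}$. For the two near-boundary indices $j=1$ and $j=N_h-1$ the ghost values $N_0^k$ and $N_{N_h}^k$ are eliminated through the three-point relations \eqref{eq:bc}, producing the one-sided update \eqref{est:N1}; hence no bound on the boundary values themselves is ever required. Thus the hypothesis at level $k$ feeds precisely the quantities appearing on the right-hand sides at level $k+1$.

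For each of the three bounds I would then invoke the corresponding pre-derived chain. The upper bound follows from \eqref{est:Nj}, where discarding the nonpositive predation term and using $N_{j\pm1}^k\le1$ to bound the discrete Laplacian numerator by $2(1-N_j^k)$ expresses $N_j^{k+1}$ as a convex combination of $N_j^k$ and $1$. The lower bound follows from \eqref{est:Nj0}, using $\alpha P_j^k/(P_j^k+N_j^k)\le\alpha$ and $N_{j\pm1}^k\ge0$. The positivity of $P$ follows from \eqref{est:Pj}, after rewriting the mortality term as $-\delta+(\delta-\gamma)/(1+\beta P_j^k)$ and discarding the two nonnegative contributions, which is where the standing hypothesis \eqref{assume-gamma-delta} that $\gamma\le\delta$ is used.

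The remaining, and essentially only, work is the bookkeeping that the single condition \eqref{eq:Deltat} simultaneously guarantees every proviso attached to those estimates. The three terms in the minimum encode precisely the three interior provisos: $\Delta t\le h^2/(\alpha h^2+2d_1)$ is $1-\Delta t\alpha-\Delta t\,2d_1/h^2\ge0$ (lower bound for $N$), $\Delta t\le h^2/(h^2+2d_1)$ is $1-\Delta t(1+2d_1/h^2)\ge0$ (upper bound for $N$), and $\Delta t\le h^2/(\epsilon\delta h^2+2d_2)$ is $1-\epsilon\delta\Delta t-\Delta t\,2d_2/h^2\ge0$ (positivity of $P$). I would then observe that the near-boundary provisos carry the coefficient $4/(3h^2)$ in place of $2/h^2$, and since $4/3<2$ these are strictly weaker than their interior counterparts and hence automatically satisfied under \eqref{eq:Deltat}. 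This consistency check, namely verifying that the interior constraints dominate the boundary ones so that no fourth term is needed in the minimum, is the main (and only mildly delicate) obstacle; once it is in place the induction closes and the stated bounds hold for all $k$.
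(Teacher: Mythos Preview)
Your proposal is correct and mirrors the paper's approach exactly: the paper derives each of the estimates \eqref{est:Nj}, \eqref{est:N1}, \eqref{est:Nj0}, \eqref{est:Pj} (and their near-boundary analogues) under precisely the provisos you list, and then simply states ``Collecting all the above results, we are now in a position to state the following theorem,'' leaving the induction and the observation that $4/3<2$ implicit. Your write-up in fact makes the self-propagating nature of the interior-index hypothesis and the dominance of the interior constraints over the boundary ones more explicit than the paper does.
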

Numerically a steady state is declared to reach when either the $L_2$ or
$L_{max}$-norm difference is less than a given tolerance value. The $L_2$ and
$L_{max}$-norm differences are defined as follows:
\begin{eqnarray*}
\left\Vert \bu(\cdot,k\Delta
  t)\right\Vert^{2}_{2}&=&\int_0^1\left|(\bu_{steady}(x,k\Delta
  t)-\bu_{h}(x,k\Delta t)\right|^2dx,\\
\left\Vert \bu(\cdot,k\Delta t)\right\Vert_{\infty}&=&\underset{x\in[0,l]}{\max
}\left|\bu_{steady}(x,k\Delta t)-\bu_{h}(x,k\Delta t)\right|,
\end{eqnarray*}
where $\bu_{steady}$ are given by \eqref{3.25} with $O(s^2)$ terms neglected
and $\bu_{h}(x,k\Delta t)$ is the piecewise linear interpolation of
the numerical solution $(N_j^k,P_j^k), j = 0, \cdots, N_h.$ 

\subsection{Numerical examples}
Set $\epsilon =1,\alpha =1.1,\gamma =0.05,\beta
=1,\delta =0.5$. The unique positive equilibrium is $(\overline{N},%
\overline{P})=(0.113585,0.471397).$ If we fix $l=1$ for the length
of the habitat the interval (\ref{3.4}) becomes
\begin{equation*}
1.488790091\times 10^{-3} \leq d_{1} < 5.95160365\times 10^{-3}.
\end{equation*}
In the following \figref{fig:d1d2}, stability regions, the mean
prey-predator diffusion coefficients, $d_{1}$ and $d_{2}$, are
plotted.

We tested our model in the cases of $(d_{1},d_{2})$ =
(0.005,0.2) and $(d_{1},d_{2})$ = (0.005,0.32), which are in the
stable and unstable regions with varying $s= 0.05, 0.1,0.2,0.3,0.4$, respectively. In these
cases, the critical value for Turing bifurcation $d_c$ is $0.271$.
\Fig{fig:merge_NP} shows the numerical prey and predator
solutions, $N$ and $P$, with respect to time at a specified fixed point
$x=0.25$. As shown in \figref{fig:merge_NP}, for $(d_{1},d_{2})$
=(0.005,0.2), the equilibrium solution
$(\overline{N},\overline{P})$ is asymptotically stable and for
$(d_{1},d_{2})=(0.005,0.32)$, the equilibrium solution
$(\overline{N},\overline{P})$ is unstable. 
For the simulation in the case of $(d_{1},d_{2}) = (0.005,0.2),$ 
we used the spatial mesh size $h=0.005,$ and the time step size $\Delta t=
0.00006$ 
determined by the \eqref{eq:Deltat}.
The iteration was run until the time equals to 1000,
with approximately $1.6\cdot 10^7$ iterations. In the case of $(d_{1},d_{2}) =
(0.005,0.32),$ 
the mesh size $h$=0.005 and the time step size $\Delta t$= 0.0000375 were 
used, which were alsothe \eqref{eq:Deltat}.
In this case also the simulation was done until the time equals to 1000,
with approximately $2.6\cdot10^7$ iterations.
In \figref{fig:d202},
in case of $(d_{1},d_{2}) = (0.005,0.2),$ the prey
and predator solutions are plotted with respect to number of iterations and
space. 
We clearly see that as time goes to infinity, the solution converges to the
equilibrium solution $(\overline{N},\overline{P})$. In the lower figure in
\figref{fig:d202}, in case of $(d_{1},d_{2}) = (0.005,0.32),$ where $d_2$ is
in unstable region, the prey
and predator solutions are plotted with respect to number of iterations and
space. We clearly see that as time goes to infinity, the solution shows the
deviation from the equilibrium solution $(\overline{N},\overline{P})$. 

In \figref{fig:varying-s},  for the values near $d_c$,
$(d_{1},d_{2})$ = (0.005,0.27) and $(d_{1},d_{2})$ = (0.005,0.272)
are considered.  By varying $s$ values from 0.05 to 0.4, the prey predator
solution has a \textit{small amplitude pattern} which we expected in the
theory.
In \figref{fig:merge-as-preda} and \figref{fig:merge-as-prey}, we have plotted 
the prey and predator solutions and their small amplitude patterns with
respect to number of iterations and space by changing $s$ values.
Near the $d_c$, in case of $(d_{1},d_{2}) = (0.005,0.27),$ we use the mesh sizes $h=0.005,
\Delta t= 0.0000444$ and ran our simulation until the number of iteration is approximately $10^7$.  
In case of $(d_{1},d_{2}) = (0.005,0.272),$ we have used 
with the mesh sizes $h=0.005$ and $\Delta t= 0.0000441176$.
Again our runs were continued until the number of iteration was approximately $10^7$.
In \figref{fig:merge-as-preda} and \figref{fig:merge-as-prey}, the axis scale in $s=0.1$ has been used as
that of the case of $s=0.4$ which has a bigger amplitude pattern.
Comparing the solutions in \figref{fig:merge-as-preda} and
\figref{fig:merge-as-prey} with the non-constant
stationary solution (\ref{3.25}), we clearly observe that as time goes
to infinity the prey and predator solutions converge to
non-constant stationary solution (\ref{3.25}) which confirms that
$(\overline{N},\overline{P})$ undergoes a Turing bifurcation.

\section*{Discussions}
System (\ref{2.1}) describes the dynamics of a ratio-dependent
predator-prey interaction with diffusion. Prey quantity grows
logistically in the absence of predation, predator mortality is
neither a constant nor an unbounded function, but it is increasing
with the predator abundance and both species are subject to
Fickian diffusion in a one-dimensional spatial habitat from which
and into which there is no migration. It is assumed that the
system without diffusion has a positive equilibrium and under certain
conditions it is asymptotically stable. We show that analytically
at a certain critical value a diffusion driven (Turing type)
instability occurs, {\it i.e.} the stationary solution stays
stable with respect to the kinetic system (the system without
diffusion). We also show that the stationary solution becomes
unstable with respect to the system with diffusion and that Turing
bifurcation takes place: a spatially non-homogenous (non-constant)
solution (structure or pattern) arises. A first order
approximation of this pattern \eqref{pattern} is explicitly given.
A numerical scheme that preserve the positivity of the numerical solutions
and the boundedness of prey solution is introduced.
Numerical examples are also included.

\section*{Acknowledgments}
Research partially supported by the BK21 Mathematical Sciences
Division, Seoul National University,  KOSEF (ABRL) R14-2003-019-01002-0, and KRF-2007-C00031.




\newpage
\begin{figure}[htb]
\begin{center}
\epsfig{file=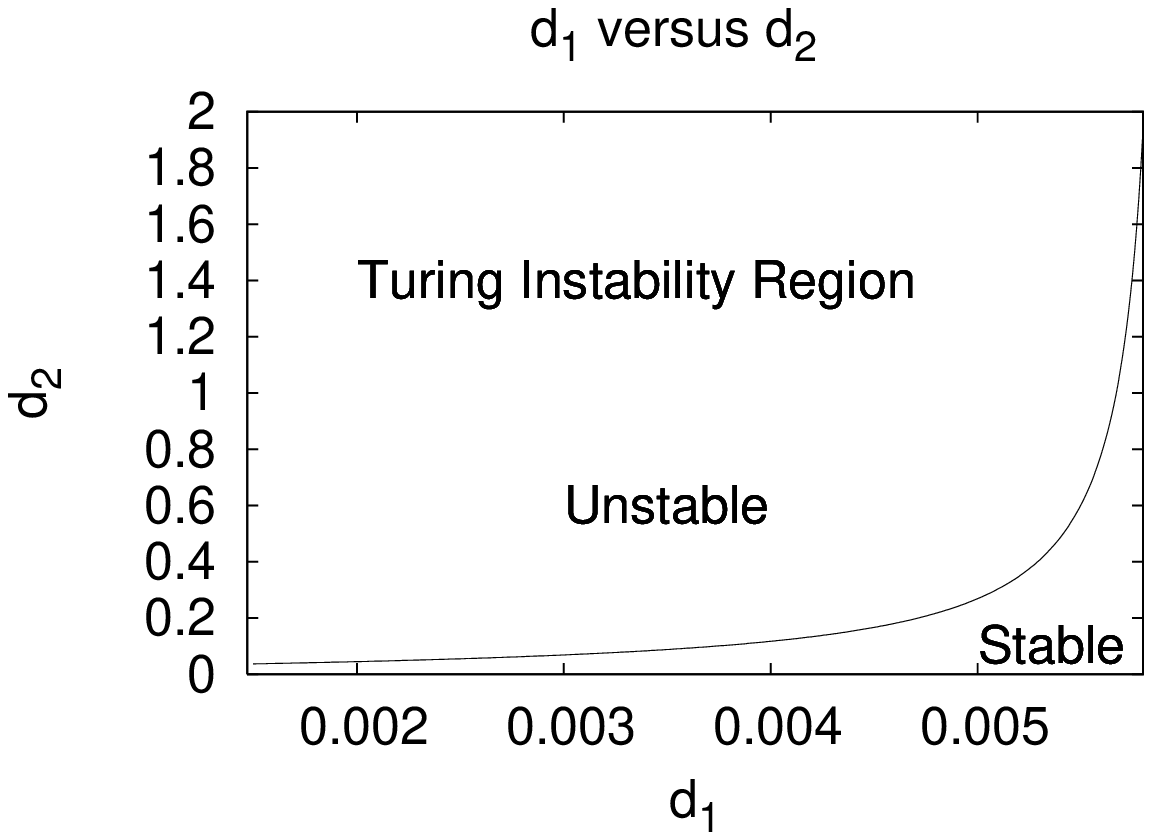, width=12cm, height=9cm}
\caption{\label{fig:d1d2}
$d_1$ and $d_2$ plot, from equation (\ref{3.5})
}
\end{center}
\end{figure}
\newpage
\begin{figure}[hbt]
\centering
{
\includegraphics[width=5cm, height=5cm]{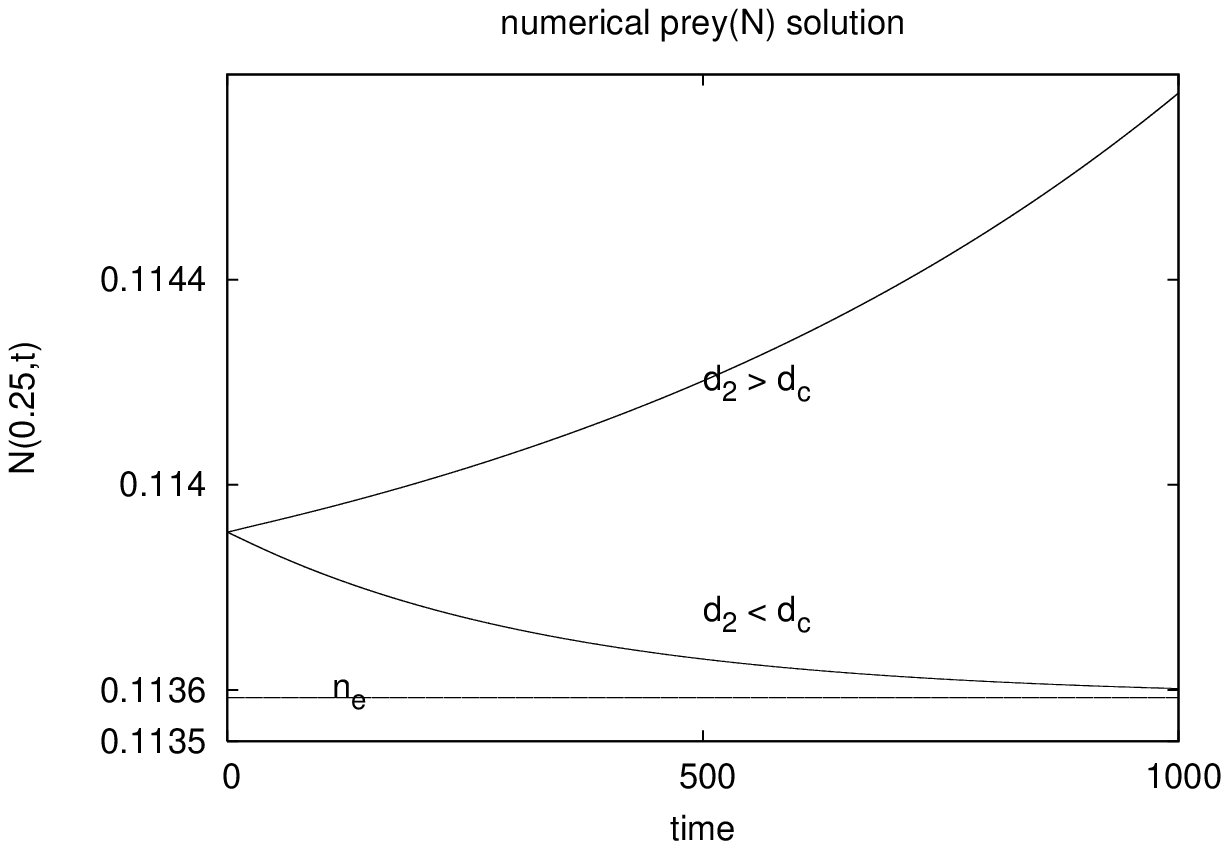}
\hskip 0.3cm
\includegraphics[width=5cm, height=5cm]{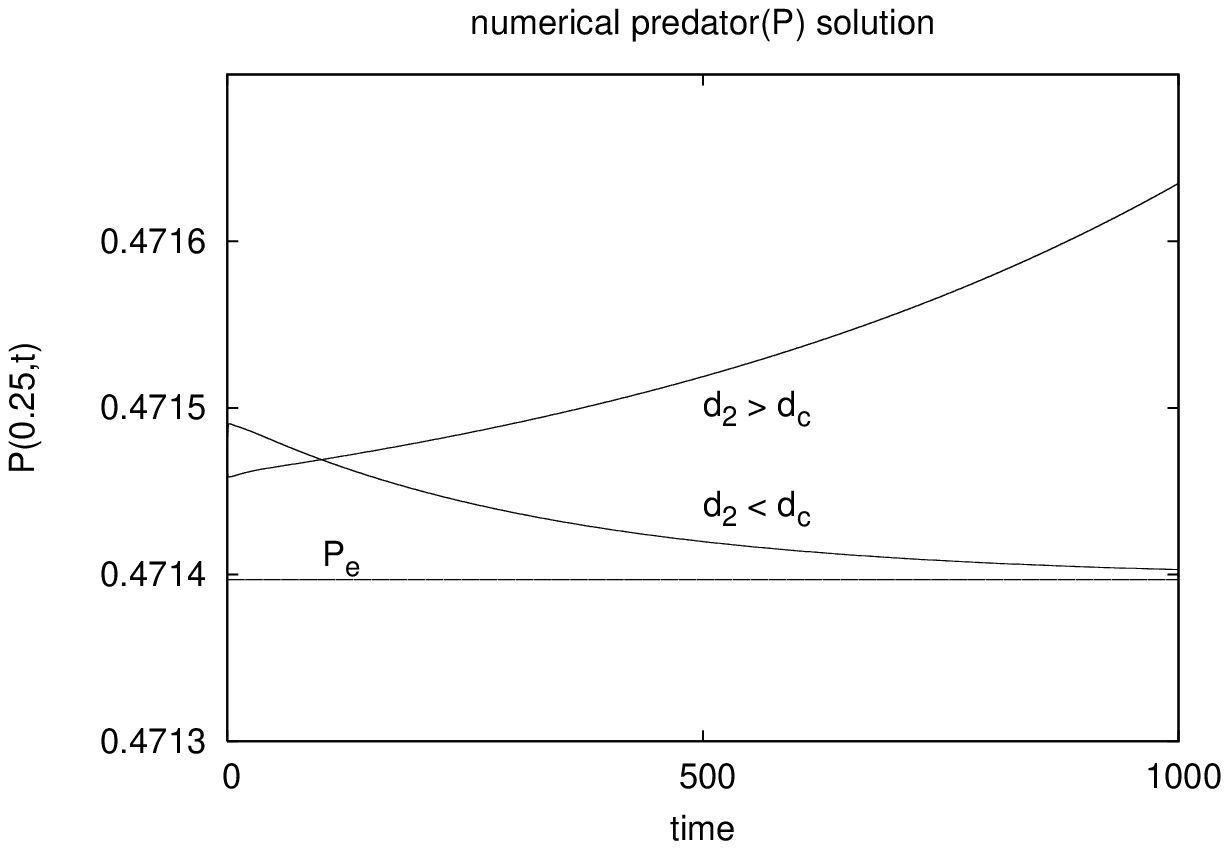}
}\\
\caption{\label{fig:merge_NP} Left: The prey solution at $x$=0.25
with respect to time, the constant line represents $N_e=\overline{N}$ and the two
solid lines represent two different $d_2$ values. Right: The
predator solution at $x$=0.25 with respect to time, the constant
line represents $P_e=\overline{P}$ and the two solid lines represent two different
$d_2$ values.}
\end{figure}
\newpage
\begin{figure}[hbt]
\centering
{
\includegraphics[width=6cm, height=6cm]{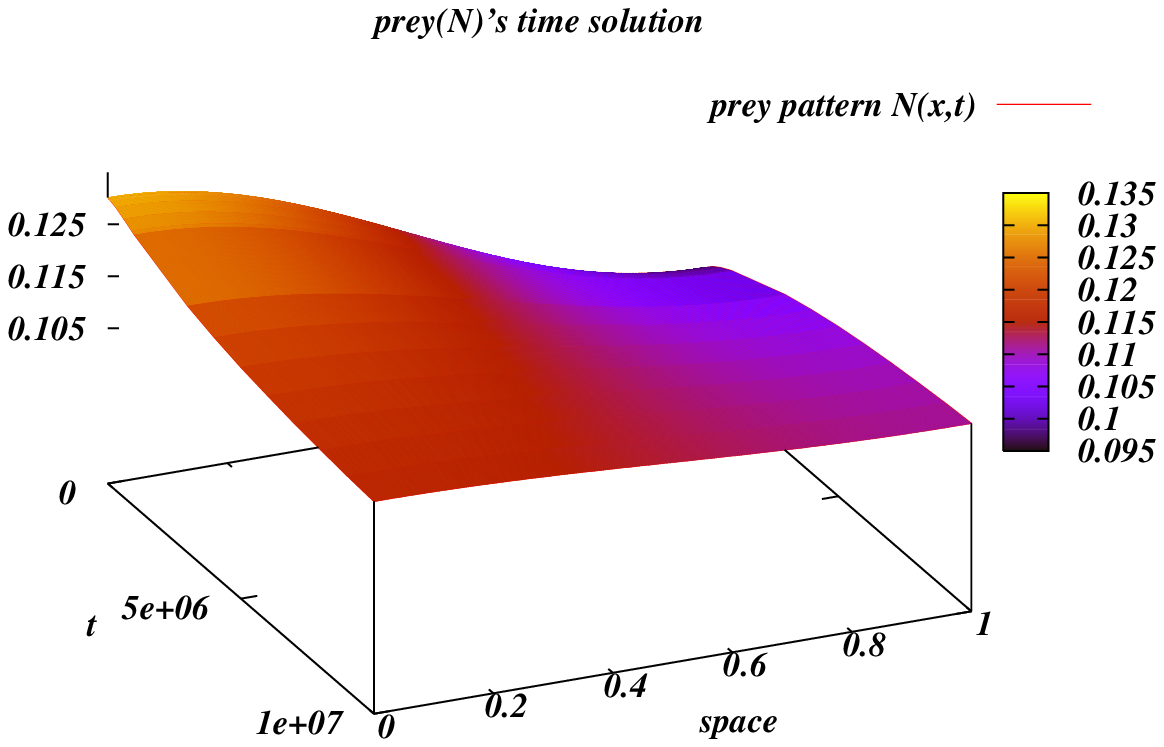}
\hskip 0.3cm
\includegraphics[width=6cm, height=6cm]{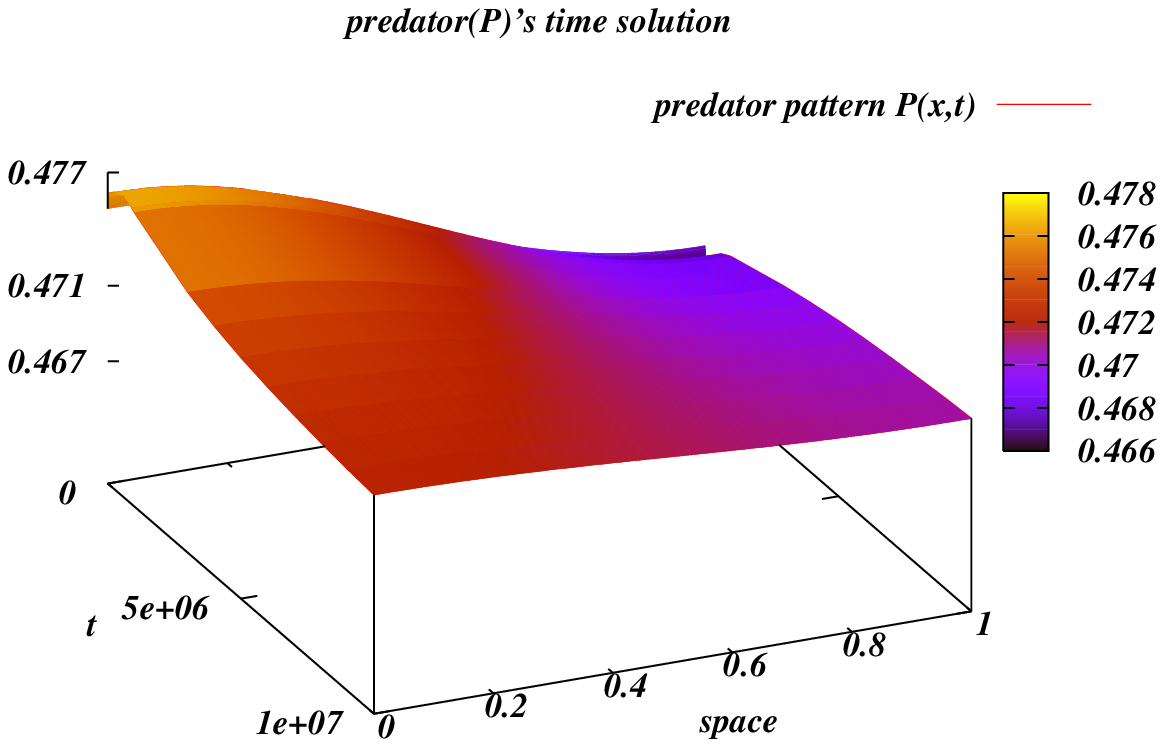}
\vskip 0.3cm
\includegraphics[width=6cm, height=6cm]{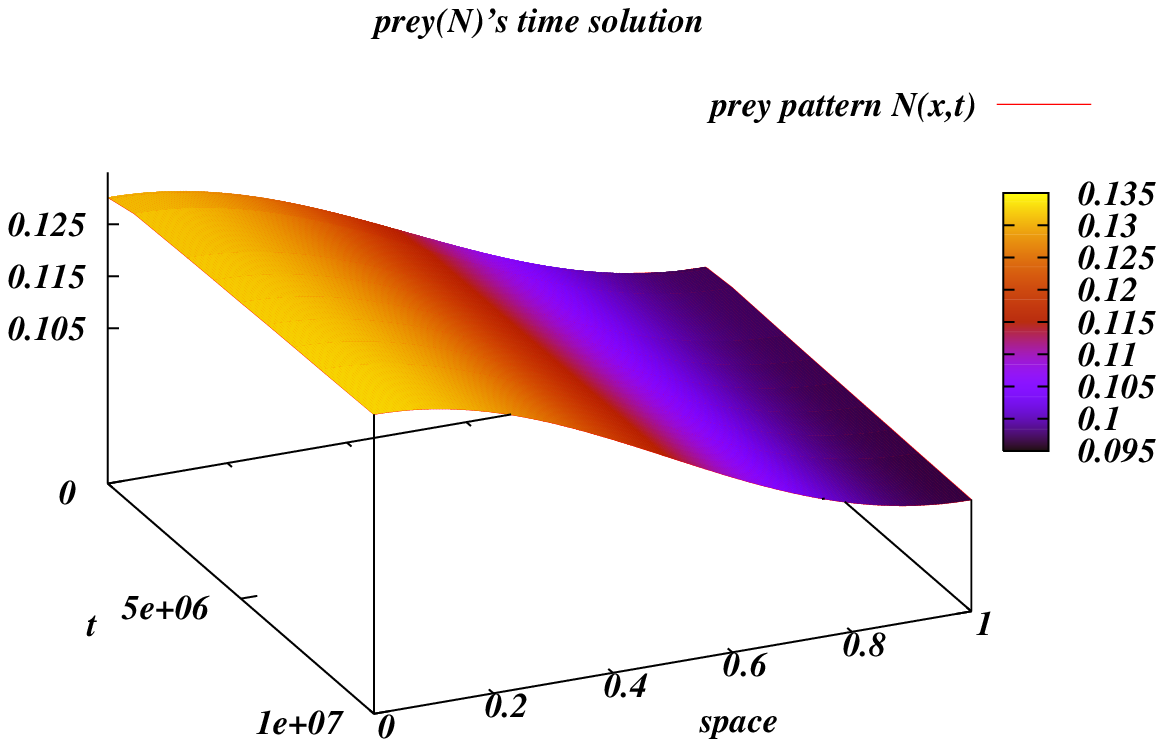}
\hskip 0.3cm
\includegraphics[width=6cm, height=6cm]{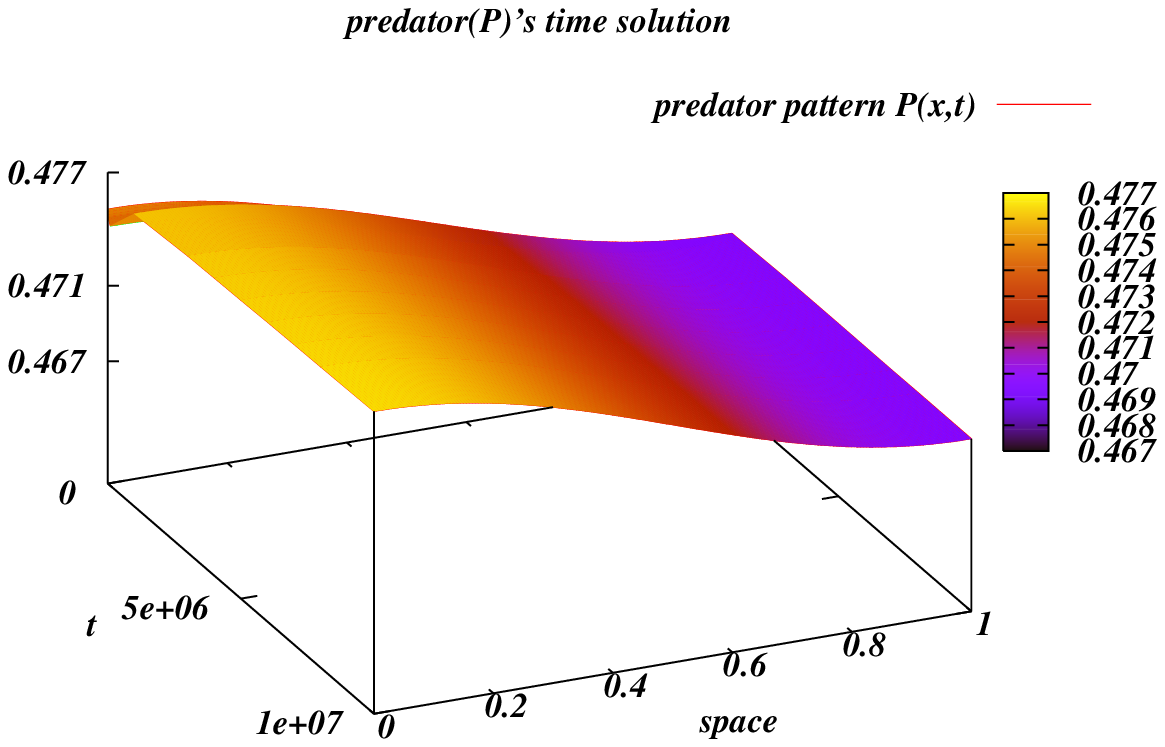}
}\\
\caption{\label{fig:d202} ($d_1$:0.005,$d_2$:0.2,$d_c$:0.271) Upperleft: The prey solution $N(x,t)$ with
      respect to time and space when $d_2 < d_c$. Prey pattern shows the
      convergence to the equilibrium solution $N$ as time increases.
      Upperright: The predator solution $P(x,t)$ with respect to space when
      $d_2 < d_c$. Predator pattern shows the convergence to the
      equilibrium solution $P$ as time
      increases.($d_1$:0.005,$d_2$:0.32,$d_c$:0.271) 
      LowerLeft: The prey solution $N(x,t)$ with
      respect to time and space when $d_2 > d_c$. Prey pattern shows the
      deviation from the equilibrium solution $N$ as time increases.
      LowerRight: The predator solution $P(x,t)$ with respect to space when
      $d_2 > d_c$. Predator pattern shows the deviation from the
      equilibrium solution $P$ as time increases.}
\end{figure}
\newpage
\begin{figure}[hbt]
  \includegraphics[width=10cm]{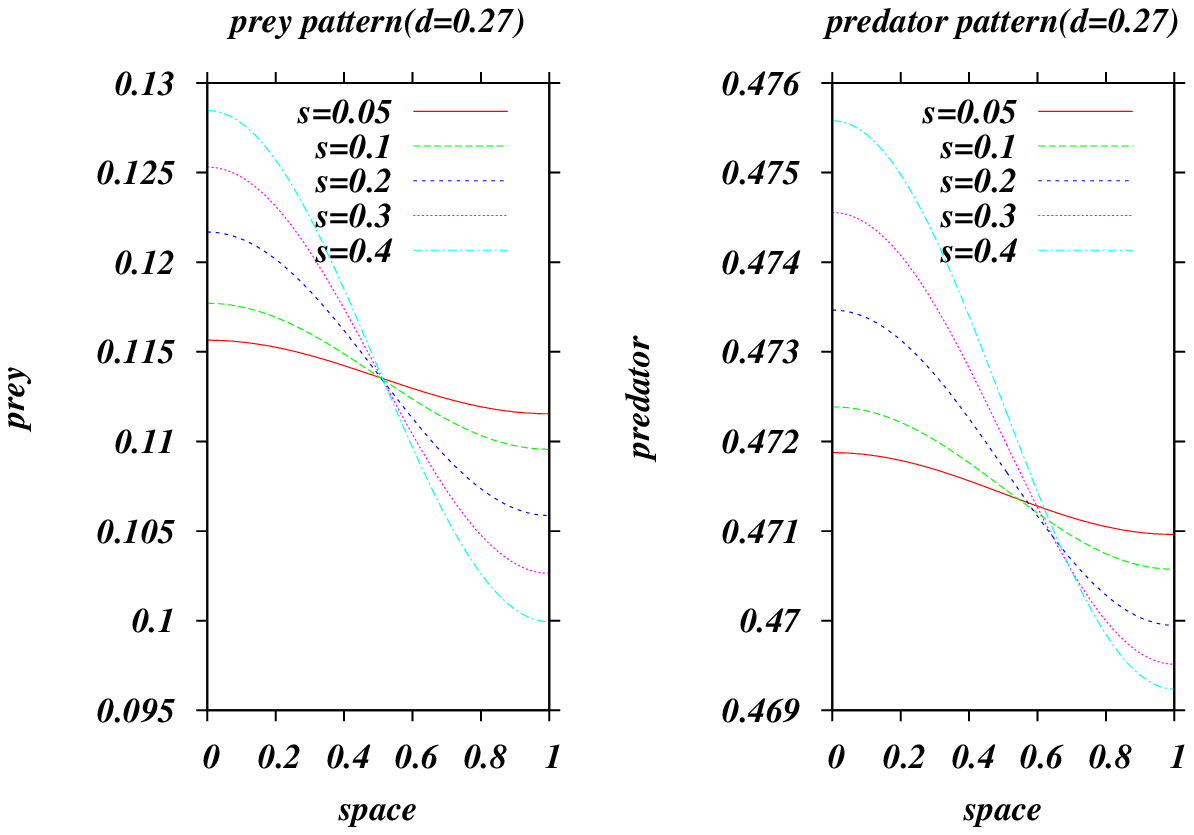}
\vskip 0.3cm
  \includegraphics[width=10cm]{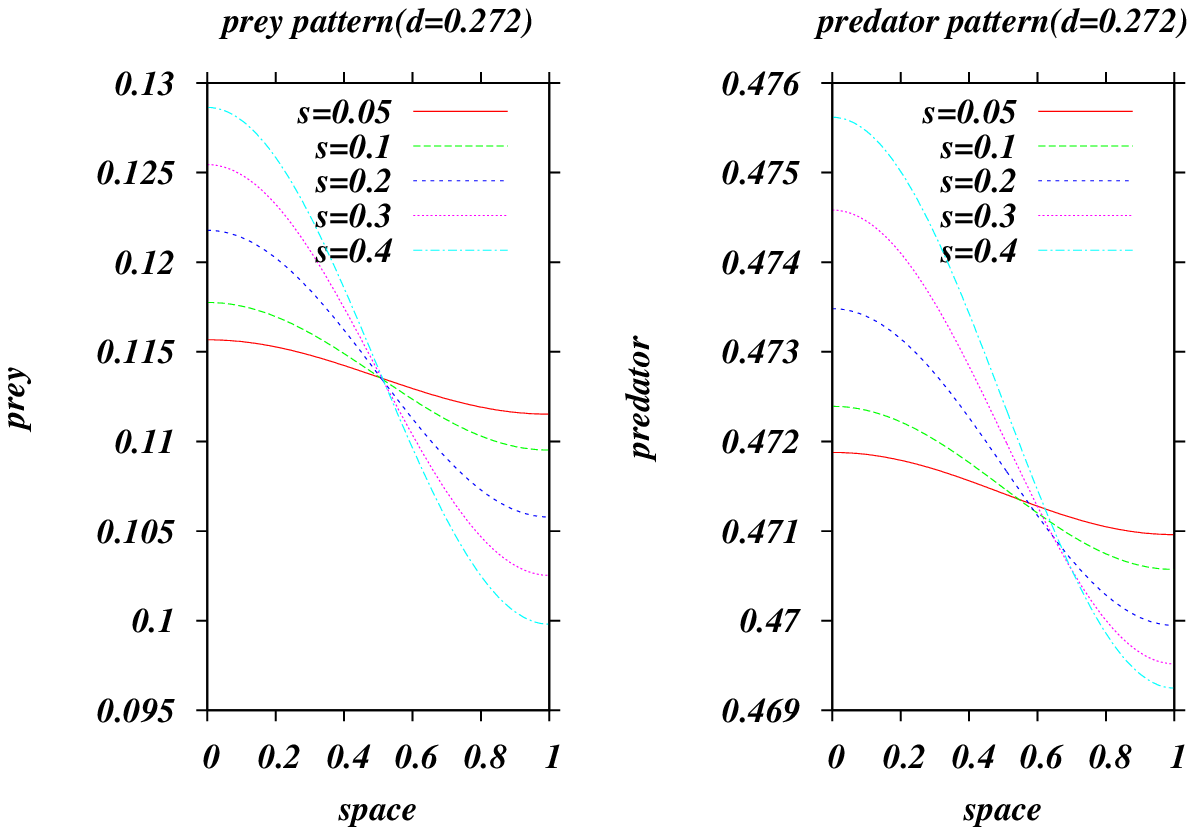}
  \caption{\label{fig:varying-s} Upper: The prey/predator solution
    pattern $N(x,t), P(x,t)$ when $d_2 < d_c$ with varing $s$. 
    ($d_1$:0.005,$d_2$:0.27,$d_c$:0.271)
    Lower: The prey/predator solution pattern $N(x,t),P(x,t)$ when $d_2 > d_c$. 
    ($d_1$:0.005,$d_2$:0.272,$d_c$:0.271)}
\end{figure}
\newpage
\begin{figure}[hbt]
  \includegraphics[height=6cm]{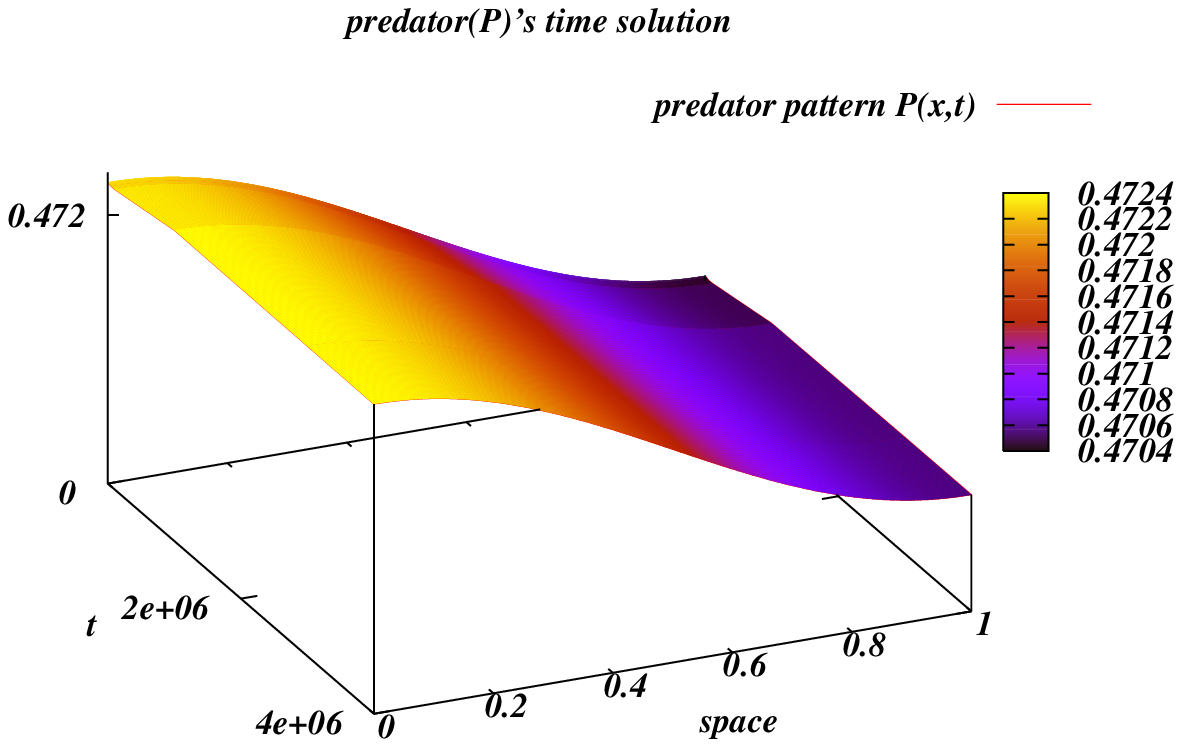}
\hskip 0.3cm
  \includegraphics[height=6cm]{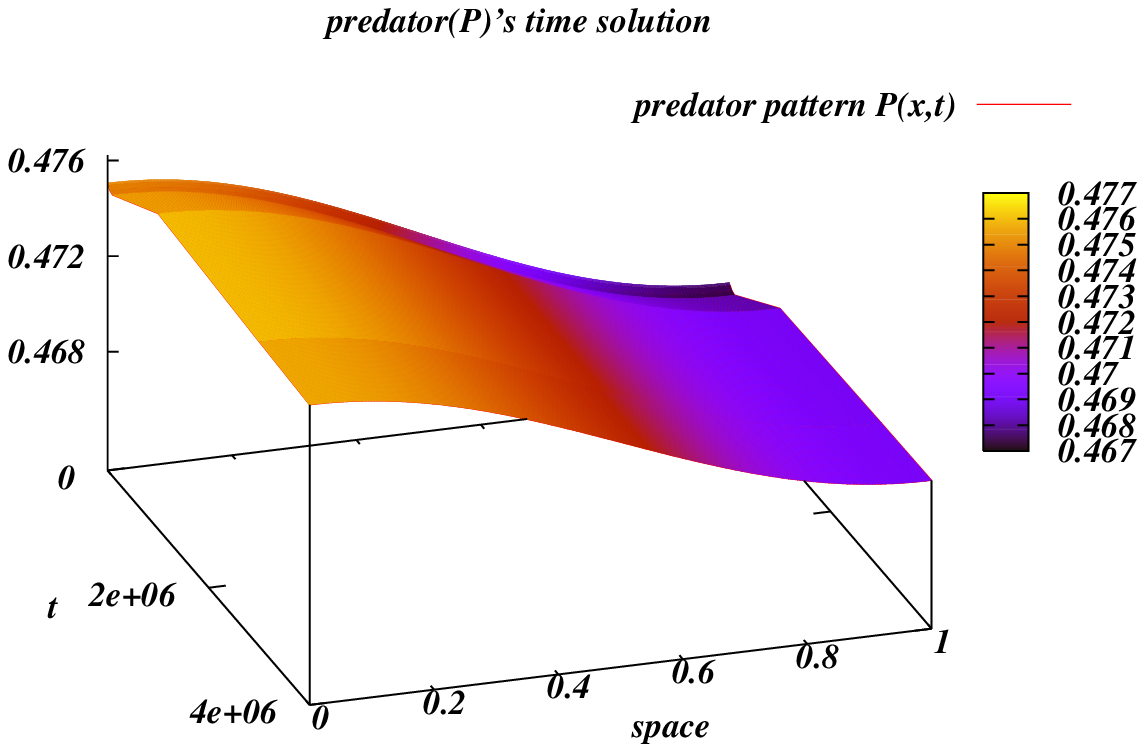}
\vskip 0.3cm
  \includegraphics[height=6cm]{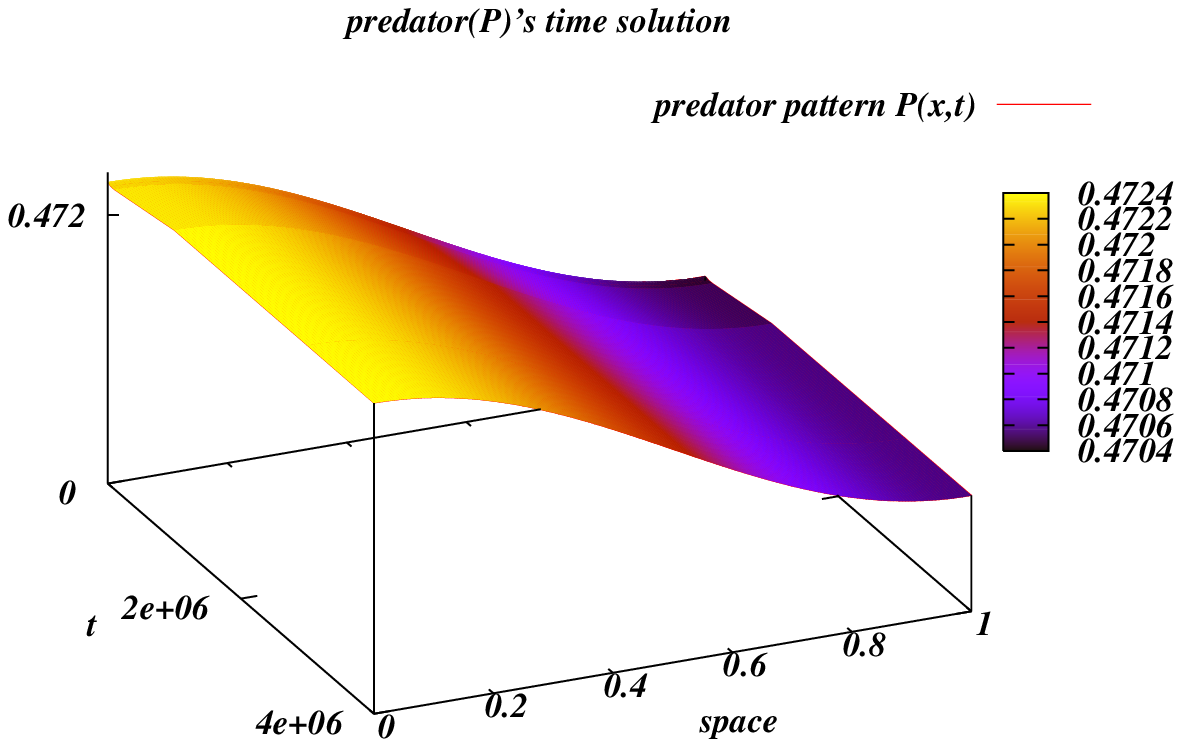}
\hskip 0.3cm
  \includegraphics[height=6cm]{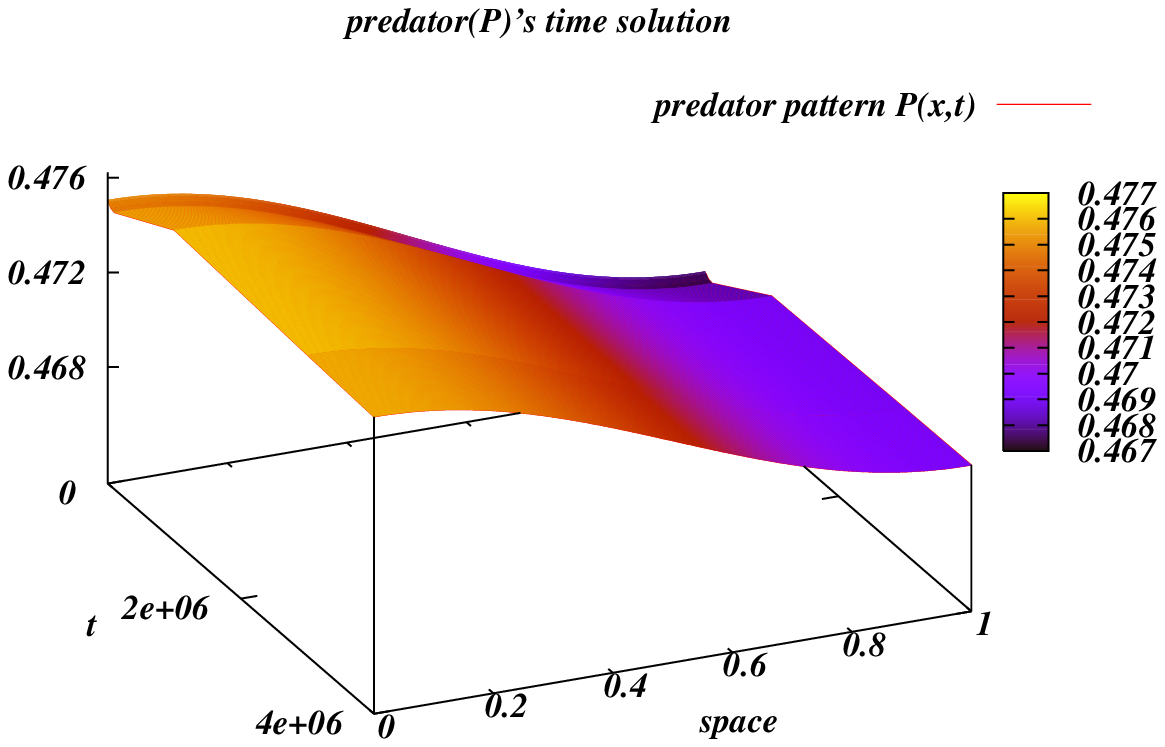}
  \caption{\label{fig:merge-as-preda} Upper Left: The predator solution
    pattern $P(x,t)$ when $s=0.1, d_2 < d_c$. Upper Right: The predator
    solution pattern $P(x,t)$ when $s=0.4,d_2 <d_c$. ($d_1$:0.005,$d_2$:0.27,$d_c$:0.271)
    Lower Left: The predator solution
    pattern $P(x,t)$ when $s=0.1, d_2 > d_c$. Upper Right: The predator
    solution pattern $P(x,t)$ when $s=0.4,d_2 > d_c$. ($d_1$:0.005,$d_2$:0.272,$d_c$:0.271)}
\end{figure}
\newpage
\begin{figure}[hbt]
    \includegraphics[height=6cm]{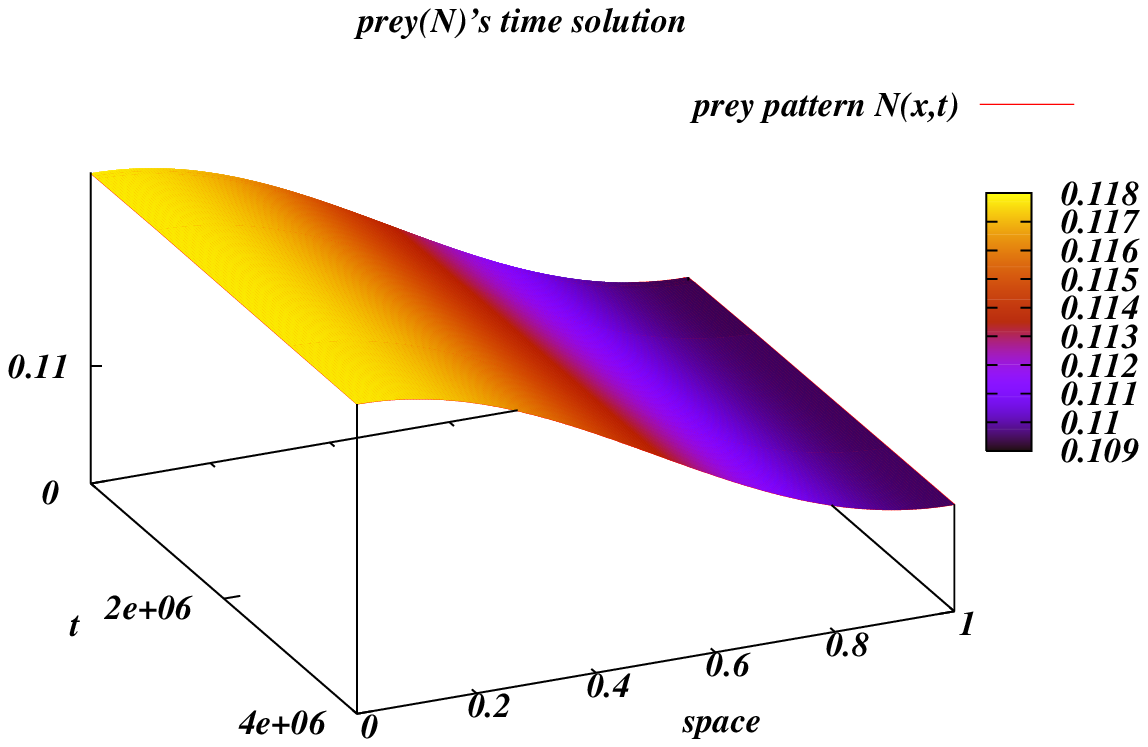}
\hskip 0.3cm
    \includegraphics[height=6cm]{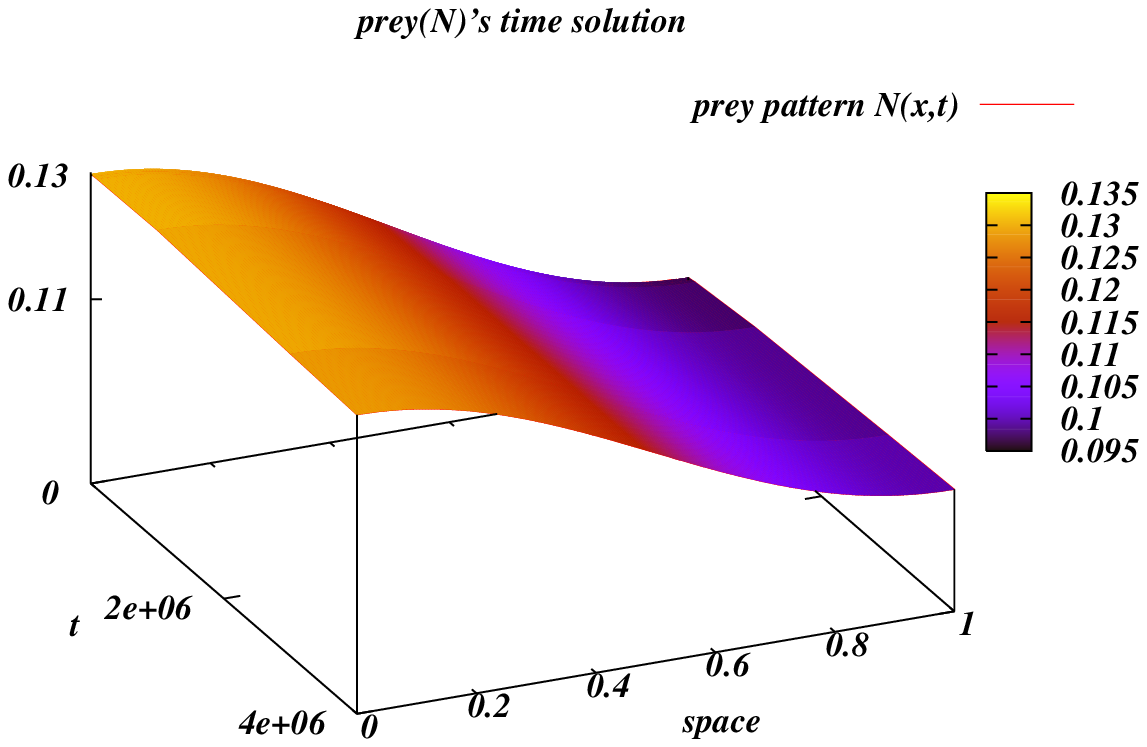}
\vskip 0.3cm
    \includegraphics[height=6cm]{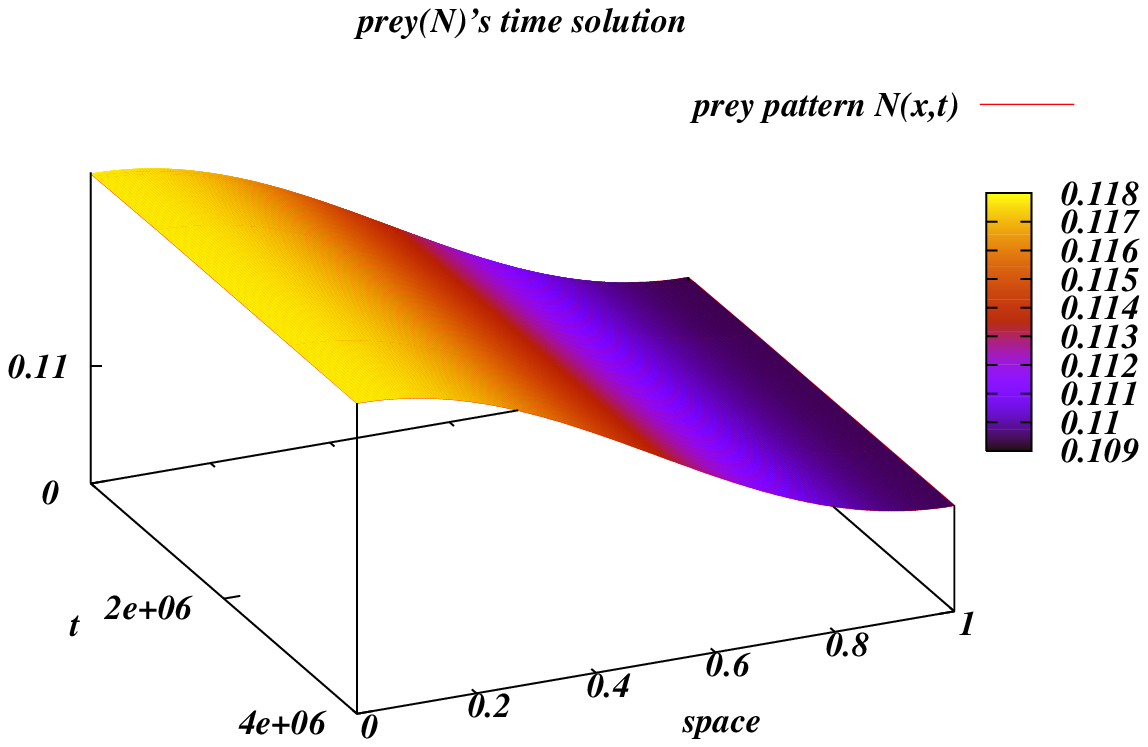}
\hskip 0.3cm
    \includegraphics[height=6cm]{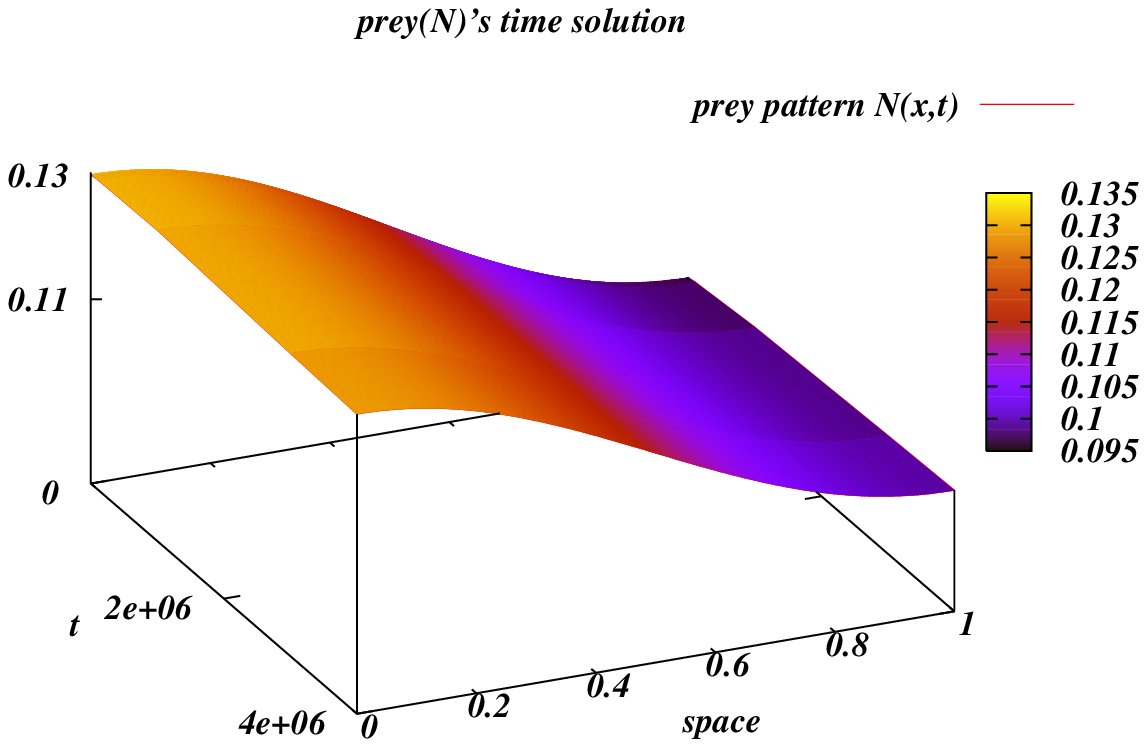}
  \caption{\label{fig:merge-as-prey} Upper Left: The prey solution
    pattern $N(x,t)$ when $s=0.1, d_2 < d_c$. Upper Right: The prey
    solution pattern $N(x,t)$ when $s=0.4,d_2 <d_c$. ($d_1$:0.005,$d_2$:0.27,$d_c$:0.271)
    Lower Left: The prey solution
    pattern $N(x,t)$ when $s=0.1, d_2 > d_c$. Upper Right: The prey
    solution pattern $N(x,t)$ when $s=0.4,d_2 > d_c$. ($d_1$:0.005,$d_2$:0.272,$d_c$:0.271)}
\end{figure}

\newpage
\begin{itemize}
\item Figure 1 $d_1$ and $d_2$ plot, from equation (\ref{3.5})

\item Figure 2 Left: The prey solution at $x$=0.25
with respect to time, the constant line represents $N_e=\overline{N}$ and the two
solid lines represent two different $d_2$ values. Right: The
predator solution at $x$=0.25 with respect to time, the constant
line represents $P_e=\overline{P}$ and the two solid lines represent two different
$d_2$ values.

\item Figure 3($d_1$:0.005,$d_2$:0.2,$d_c$:0.271) Upperleft: The prey solution $N(x,t)$ with
      respect to time and space when $d_2 < d_c$. Prey pattern shows the
      convergence to the equilibrium solution $N$ as time increases.
      Upperright: The predator solution $P(x,t)$ with respect to space when
      $d_2 < d_c$. Predator pattern shows the convergence to the
      equilibrium solution $P$ as time
      increases.($d_1$:0.005,$d_2$:0.32,$d_c$:0.271) 
      LowerLeft: The prey solution $N(x,t)$ with
      respect to time and space when $d_2 > d_c$. Prey pattern shows the
      deviation from the equilibrium solution $N$ as time increases.
      LowerRight: The predator solution $P(x,t)$ with respect to space when
      $d_2 > d_c$. Predator pattern shows the deviation from the
      equilibrium solution $P$ as time increases.

\item Figure 4 Left: The prey/predator solution
    pattern $N(x,t), P(x,t)$ when $d_2 < d_c$ with varing $s$. 
    ($d_1$:0.005,$d_2$:0.27,$d_c$:0.271)
    Right: The prey/predator solution pattern $N(x,t),P(x,t)$ when $d_2 > d_c$. 
    ($d_1$:0.005,$d_2$:0.272,$d_c$:0.271)

\item Figure 5 Upper Left: The predator solution
    pattern $P(x,t)$ when $s=0.1, d_2 < d_c$. Upper Right: The predator
    solution pattern $P(x,t)$ when $s=0.4,d_2 <d_c$. ($d_1$:0.005,$d_2$:0.27,$d_c$:0.271)
    Lower Left: The predator solution
    pattern $P(x,t)$ when $s=0.1, d_2 > d_c$. Upper Right: The predator
    solution pattern $P(x,t)$ when $s=0.4,d_2 >
    d_c$. ($d_1$:0.005,$d_2$:0.272,$d_c$:0.271)

\item  Figure 6 Upper Left: The prey solution
    pattern $N(x,t)$ when $s=0.1, d_2 < d_c$. Upper Right: The prey
    solution pattern $N(x,t)$ when $s=0.4,d_2 <d_c$. ($d_1$:0.005,$d_2$:0.27,$d_c$:0.271)
    Lower Left: The prey solution
    pattern $N(x,t)$ when $s=0.1, d_2 > d_c$. Upper Right: The prey
    solution pattern $N(x,t)$ when $s=0.4,d_2 > d_c$. ($d_1$:0.005,$d_2$:0.272,$d_c$:0.271)
\end{itemize}

\end{document}